\newcommand{\bXi}{ \boldsymbol{\mathcal{\chi}} }
\definecolor{darkred}{rgb}{.7,0,0}
\definecolor{green}{rgb}{0,0.7,0}
\definecolor{myblue}{rgb}{0,0,0.7}
\newcommand{\bAg}{{\bf A}_\gamma}
\newcommand{\bAG}{{\bf A}_\Gamma}
\newcounter{assumption}
\newenvironment{assumption}{\refstepcounter{assumption}\equation}{\tag{A\theassumption}\endequation}
\title{A posteriori error estimates for the Laplace-Beltrami operator on parametric $C^2$ surfaces}
\author{
  Andrea Bonito\thanks{Department of Mathematics, Texas A\&M University, College Station TX, 77843; email: {\tt bonito@math.tamu.edu}. 
  Partially supported by NSF Grant DMS-1254618.
}
\and
Alan Demlow\thanks{Department of Mathematics, Texas A\&M University, College Station TX, 77843; email: {\tt demlow@math.tamu.edu}.
Partially supported by NSF Grant DMS-1720369.
}
}
\begin{document}

\maketitle 

\begin{keywords}
Laplace-Beltrami operator, surface finite element methods, a posteriori error estimates, adaptive finite element methods
\end{keywords}

\begin{AM} 58J32, 65N15, 65N30
\end{AM}

 \pagestyle{myheadings}
\thispagestyle{plain}
\markboth{A. BONITO AND A. DEMLOW}{A POSTERIORI ESTIMATES ON PARAMETRIC SURFACES}

\begin{abstract}
We prove new a posteriori error estimates for surface finite element methods (SFEM).  Surface FEM approximate solutions to PDE posed on surfaces.  Prototypical examples are elliptic PDE involving the Laplace-Beltrami operator.  Typically the surface is approximated by a polyhedral or higher-order polynomial approximation.  The resulting FEM exhibits both a geometric consistency error due to the surface approximation and a standard Galerkin error.  A posteriori estimates for SFEM require practical access to geometric information about the surface in order to computably bound the geometric error.  It is thus advantageous to allow for maximum flexibility in representing surfaces in practical codes when proving a posteriori error estimates for SFEM.  However, previous a posteriori estimates using general parametric surface representations are suboptimal by one order on $C^2$ surfaces.  Proofs of error estimates optimally reflecting the geometric error instead employ the closest point projection, which is defined using the signed distance function.  Because the closest point projection is often unavailable or inconvenient to use computationally, a posteriori estimates using the signed distance function have notable practical limitations.  We merge these two perspectives by assuming {\it practical} access only to a general parametric representation of the surface, but using the distance function as a {\it theoretical} tool.  This allows us to derive sharper geometric estimators which exhibit improved experimentally observed decay rates when implemented in adaptive surface finite element algorithms.
\end{abstract}

\section{Introduction}

The Laplace-Beltrami operator (or surface Laplacian) has received a great deal of attention recently in part due to its ubiquity in geometric PDEs and in particular in applications involving surfaces that evolve and are the domain of an underlying PDE.  Typical examples are mean curvature flow and surface diffusion appearing in materials science modeling \cite{MR1943134} or Willmore flow as a prototype for equilibrium shapes of membranes governed by bending energy \cite{seifert1997configurations}.

In this paper we consider finite element approximation of solutions to the Laplace-Beltrami problem
\begin{equation}
\label{model_problem}
-\Delta_\gamma u = f \hbox{ on } \gamma.
\end{equation}
Here $\gamma \subset \mathbb{R}^{n+1}$ is an orientable, $C^2$ hypersurface, and $\Delta_\gamma$ is the Laplace-Beltrami operator on $\gamma$.  We mainly focus on the cases where $\gamma$ is closed so that the compatibility condition $\int_\gamma f =0$ must be assumed to guarantee existence of a solution, and we additionally impose $\int_\gamma u =0$ in order to fix a unique solution.  

Dziuk defined a canonical piecewise linear finite element method for approximating solutions to \eqref{model_problem} in \cite{Dziuk:88}.  This method proceeds by approximating $\gamma$ by a polyhedral surface $\Gamma$ having triangular faces which serve as the finite element mesh.  Finite element shape functions are then defined on this mesh and used to approximately solve \eqref{model_problem}.  The corresponding stiffness matrix matches the cotangent formula \cite{wardetzky2008convergence} for the approximation of the surface Laplacian on a polyhedral surface.
This procedure was extended to higher-degree finite element spaces and surface approximations in \cite{Demlow:09}.  A variational crime is committed in this method due to the approximation of $\gamma$ by $\Gamma$, and the resulting consistency error is often called  a ``geometric error''.  If $\Gamma$ is a degree $k$ surface approximation on a quasi-uniform mesh of width $h$ and a degree $r$ finite element space is used to construct the finite element approximation $U$, a priori error analysis yields
\begin{equation}
\label{a_priori}
\|u-U\|_{H^1(\gamma)} \le O(h^r) +O(h^{k+1})
\end{equation}
under the assumption that $\gamma$ and $u$ are sufficiently smooth.  

The assumption that $\gamma$ is $C^2$ is fundamental in this error analysis.  When $\gamma$ is $C^2$ it may be represented {\it implicitly} as the $0$ level set of a signed distance function $d$, and there is also a uniquely defined closest-point projection $\bP_d$ mapping a tubular neighborhood of $\gamma$ onto $\gamma$.   The properties of $\bP_d$ are used integrally in proving the error estimate \eqref{a_priori}.  If on the other hand one were to represent $\gamma$ {\it parametrically} via some {\it arbitrary} smooth map $\bP : \Gamma \rightarrow \gamma$, then the corresponding natural finite element error analysis would only yield
\begin{equation}
\label{bad_a_priori}
\|u-U\|_{H^1(\gamma) } \le O(h^r)+O(h^k).
\end{equation}
The properties of the closest point projection thus lead in effect to a ``geometric superconvergence'' result in which the consistency error is of higher order than one might expect based on generic considerations.  We emphasize that obtaining the superior a priori convergence rate in the geometric error seen in \eqref{a_priori} does not generally required {\it practical computational access} to the closest point projection.  Rather, theoretical use in proofs generally suffices.   

A posteriori error estimates were proved for the case $r=k=1$ in \cite{DemlowDziuk:07} under the assumption that $\gamma$ is $C^2$.  These estimates have the form 
\begin{equation}
\label{apost_form}
\|u-U\|_{H^1(\gamma)} \le \eta + \mathcal{G}_{\bP_d},
\end{equation}
where $\eta$ is a standard residual-type error estimator for controlling energy errors, and $\mathcal{G}_{\bP_d}$ controls the geometric consistency error a posteriori.  Notably, $\mathcal{G}_{\bP_d}$ heuristically retains the ``superconvergent'' a priori order $h^{k+1}=h^2$ that is seen in \eqref{a_priori}.  There are two significant drawbacks to the approach to a posteriori error estimation for \eqref{model_problem} taken in \cite{DemlowDziuk:07}.  First, the assumption that $\gamma$ is $C^2$ may not hold in practice.  Secondly, in contrast to the case of a priori error analysis, the a posteriori estimates of \cite{DemlowDziuk:07} assume {\it practical computational access} to the closest point projection $\bP_d$.  This assumption may be unrealistic.  Closed-form analytical expressions for $\bP_d$ exist only in the very restricted event that $\gamma$ is a sphere or a torus.  If $\gamma$ is computationally represented as the zero level set of {\it some} sufficiently smooth function, then it is possible to approximate $\bP_d$ by for example using a Newton-type algorithm \cite{DemlowDziuk:07, Gr17}.  Practical experience however indicates that this procedure can add significant expense to the code.  Finally, it may be that $\gamma$ is given as a parametric representation.   A posteriori error estimates in which the surface representation follows the framework of \cite{DemlowDziuk:07} have also been proved for discontinuous Galerkin \cite{DM16} and cut \cite{DO12} surface finite element methods along with $L_2$ and $L_\infty$ estimates for Dziuk's method \cite{CD15}.  

An alternate approach to representing $\gamma$ in the context of a posteriori error estimation and adaptivity is given in \cite{BCMMN16, BCMN:Magenes}. In these works $\gamma$ is only assumed to be globally Lipschitz and elementwise $C^{1, \alpha}$.  In addition, $\gamma$ is computationally represented via a parametrization $\bP:\Gamma \rightarrow \gamma$.  The estimators then have the form
\begin{equation}
\label{par_apost_form}
\|u-U\|_{H^1(\gamma)} \le \eta + \mathcal{G}_{\bP},
\end{equation}
where $\eta$ is a standard residual error estimator as above, and $\mathcal{G}_{\bP}$ bounds the geometric consistency error by using information from the parametrization $\bP$.  This framework avoids the two main flaws of the approach of \cite{DemlowDziuk:07}:  It allows for surfaces less regular than $C^2$, and allows for a much more flexible surface representation that may take the form of an implicit representation if it is available, but does not require computational access to the distance map $\bP_d$.  The price that is paid for these advantages is that the geometric consistency estimator $\mathcal{G}_{\bP}$ heuristically only retains the reduced a priori convergence order $O(h^k)$ seen in \eqref{bad_a_priori}, even if $\gamma$ is $C^2$.  In the latter case, adaptive algorithms based on $\mathcal{G}_{\bP}$ generally resolve the geometry much more than is necessary to reach a given error tolerance.  Quasi-optimal error decay for adaptive finite element approximations of $u$ lying in certain regularity classes is derived in \cite{BCMMN16, BCMN:Magenes}.  However, these regularity classes are artificially restricted by the surface approximation when $\gamma$ is $C^2$.  We demonstrate computationally below that over-resolution of the geometry considerably affects the efficiency of the adaptive algorithm.

Our goal in this paper is to produce a posteriori error estimates for finite element approximations to \eqref{model_problem} which combine the major advantages of the parametric and implicit approaches to representing $C^2$ surfaces $\gamma$.  More precisely, we assume that our code has {\it practical} access only to {\it some} reasonable parametric representation $\bP$ of $\gamma$, as in \cite{BCMMN16, BCMN:Magenes,  MMN:11}.  On the other hand, we know in this case that the closest point projection $\bP_d$ exists.  We make {\it theoretical} use of its properties to produce computable a posteriori error estimators that require information only from $\bP$, but which heuristically retain the ``superconvergent'' geometric order $h^{k+1}$ seen in \eqref{a_priori} and $\mathcal{G}_{\bP_d}$.  Our proofs that these estimators are reliable and efficient require a number of sometimes technical steps, but underlying them is the simple observation that the closest point parametrization $\bP_d:\Gamma \rightarrow \gamma$ is optimal in $L_\infty$.  That is, for any other parametrization $\bP:\Gamma \rightarrow \gamma$, 
\begin{equation}\label{e:closest}
|\bx-\bP_d(\bx)| \le  |\bx-\bP(\bx)|, ~~\bx \in \Gamma.
\end{equation}
Note that we do not consider surfaces with less than $C^2$ regularity as many critical properties of the closest point projection do not hold beneath that threshold.   

Finally, we point out that the type of parametric finite element methods considered here are used to approximate time dependent problems such as the mean curvature flow \cite{Dziuk:91}, capillary surfaces \cite{Bansch:01},
surface diffusion \cite{BaMoNo:05,BaGaNu:08},
Willmore flow \cite{BaGaNu:08,MR3508988,MR3712171,BNP:10,Dziuk:07,Rusu:05}, fluid biomembranes \cite{MR3614011,BNP:11}, 
and fluid membranes with orientational order \cite{BDN:10,BDN:12}. 
The analysis of these methods is largely open, but we refer to \cite{BaMoNo:04,MR3508988,MR3614011,DeDz:99,DeDz:00,DeDz:06,DeDzEl:05} as well as the survey \cite{DeDzEl:05} for some of the early work including level set and phase field approaches.

The paper is outlined as follows.  In Section \ref{s:prelim} we define approximations of surfaces and lay out assumptions which must be placed on the resolution of $\gamma$ by its discrete approximations in order for our a posteriori estimates to hold.  In Section \ref{S:Laplace-Beltrami} we prove our a posteriori error estimates.  Section \ref{s:numerics} contains numerical tests illustrating the advantages of our estimates.  Finally Section \ref{s:perspectives} contains some concluding remarks and discussion of possible future research directions.

\section{Preliminaries}\label{s:prelim}

\subsection{Representation of Parametric  Surfaces}\label{S:repres-surface}
%
We assume that the surface $\gamma$ is described as the deformation of a $d$ 
dimensional polyhedral surface $\overline{\Gamma}$ by a globally bi-Lipschitz
\emph{homeomorphism} $\bP:\overline{\Gamma} \rightarrow \gamma \subset \mathbb
R^{n+1}$. 
The overline notation is to emphasize that $\overline{\Gamma}$ is piecewise affine.  Thus there is $\overline{L}>0$ such that for all $\overline{\bx}, \overline{\by} \in \overline{\Gamma}$
\begin{equation}
\label{surf_bi_lipschitz}
\overline{L}^{-1} |\overline{\bx}-\overline{\by}| \le |\tilde{\bx} -\tilde{\by}| \le \overline{L}|\overline{\bx}-\overline{\by}|, \qquad  \hbox{where } \tilde{\bx} = \bP(\overline{\bx}), ~ \tilde{\by}=\bP(\overline{\by}).
\end{equation}

The (closed) facets of $\overline{\Gamma}$ are denoted  $\overline{T}$, and form the collection $\overline{\T}=\{\overline{T} \}$.   We assume that these facets are all simplices.  Extension to other element shapes such as quadrilaterals and to nonconforming discretizations is possible under reasonable assumptions with minor modifications.  
We let $\bP_T:\overline{T} \rightarrow \mathbb R^{n+1}$ be
the restriction of $\bP$ to $\overline{T}$.
This partition of $\overline{\Gamma}$ induces the partition
$\widetilde{\T}=\{\widetilde{T}\}_{\overline{T} \in \overline{\T}}$ of $\gamma$ upon setting
\[
\widetilde{T} := \bP_T(\overline{T}), ~~\overline{T}\in \overline{\T}.
\]
Note that this \emph{non-overlapping} parametrization allows for not necessarily globally $C^2$ parameterizations of $\gamma$.  We additionally define {\it macro patches} 
\[ \overline{\omega}_T=\cup_{\overline{T}', \overline{T}' \cap \overline{T} \neq \emptyset} \overline{T}' \]
and 
\[ \widetilde{\omega}_T =  \bP(\overline{\omega}_T). \]
Finally, we let $h_T= | \overline{T} |^{\frac 1 n}$.  

Let $\widehat{T}$ be the unit reference simplex, which we sometimes refer to as the {\it universal parametric domain}.
We denote by $\overline{\bX}_T:\mathbb R^n\to\mathbb R^{n+1}$ the affine map such that $\overline{T} = \overline{\bX}_T(\widehat{T})$
and we let $\bXi_T := \bP \circ\overline{\bX}_T: \widehat{T} \rightarrow \widetilde{T}$ be the corresponding local parametrization of $\widetilde{T}$. 
We extend this property by assuming that there are patches $\widehat{\omega}_T$, $\overline{T} \in \overline{\T}$, consisting of the universal parametric domain $\widehat{T}$ and other shape-regular simplices of unit size such that $\overline{\bX}_T$ may be extended as a continuous, piecewise-affine bijection such that $\overline{\omega}_T=\overline{\bX}_T(\widehat{\omega}_T)$.   Because $\gamma$ is closed, the domains $\widehat{\omega}_T$ may be constructed so that they are convex, and we assume that this is the case.  The patchwise parametric maps may be easily constructed by sewing together their elementwise counterparts except in a few pathological cases involving very course meshes such as a 6-triangle triangulation of the sphere consisting of two ``stacked'' tetrahedra.  In that case, the neighbors of any element consist of the entire set of elements which cannot be ``flattened out'' (plane and sphere are not homotopic). 

We follow \cite{BP:11} and define the shape regularity constant of the subdivision $\overline{\mathcal T} :=\{ \overline{T} \}_{i=0}^M$ as the smallest constant $\overline c$ such that
\begin{equation}\label{e:shape_reg_init}
\overline c^{-1}{\rm diam}(\overline T) | \bw |  \leq | D \overline{\bX}_T(\hat x) \bw |  \leq \overline c \hspace{2pt} {\rm diam}(\overline T) | \bw|, \qquad \bw \in \mathbb R^n, \qquad \overline T \in \overline{\mathcal T},  
\end{equation}
and assume that $\overline c <\infty$. In the following, we omit to mention the dependency on $\overline c$ of the constants appearing in our argumentation. 
We additionally assume that the number of elements in each patch $\widehat{\omega}_T$ is uniformly bounded.  This assumption automatically follows from shape regularity for triangulations of Euclidean domains, but the situation is more subtle for surface triangulations as illustrated in Figure~\ref{f:valence}.  Such a bound does for example hold if $\overline{\Gamma}$ is derived by systematic refinement of an initial surface mesh with a uniform bound on the number of elements in a patch \cite{DemlowDziuk:07}, or more generally using adaptive refinement strategies \cite{BCMMN16,BCMN:Magenes}.  In addition, this implies that all elements in $\overline{\omega}_T$ have uniformly equivalent diameters, as for shape regular triangulations on Euclidean domains.  Finally, because each vertex has uniformly bounded valence the number of parametric patches $\widehat{\omega}_T$ needed may be less than $\#\T$, that is, $\widehat{\omega}_T=\widehat{\omega}_{T'}$ may hold for $\overline{T} \neq \overline{T}'$.  We thus assume that there are a finite collection of such parametric patches independent of the discretization, and that properties of these patches such as constants in extension operators are uniform across the collection.  
\begin{figure}[ht!]
\centerline{\includegraphics[width=0.3\textwidth]{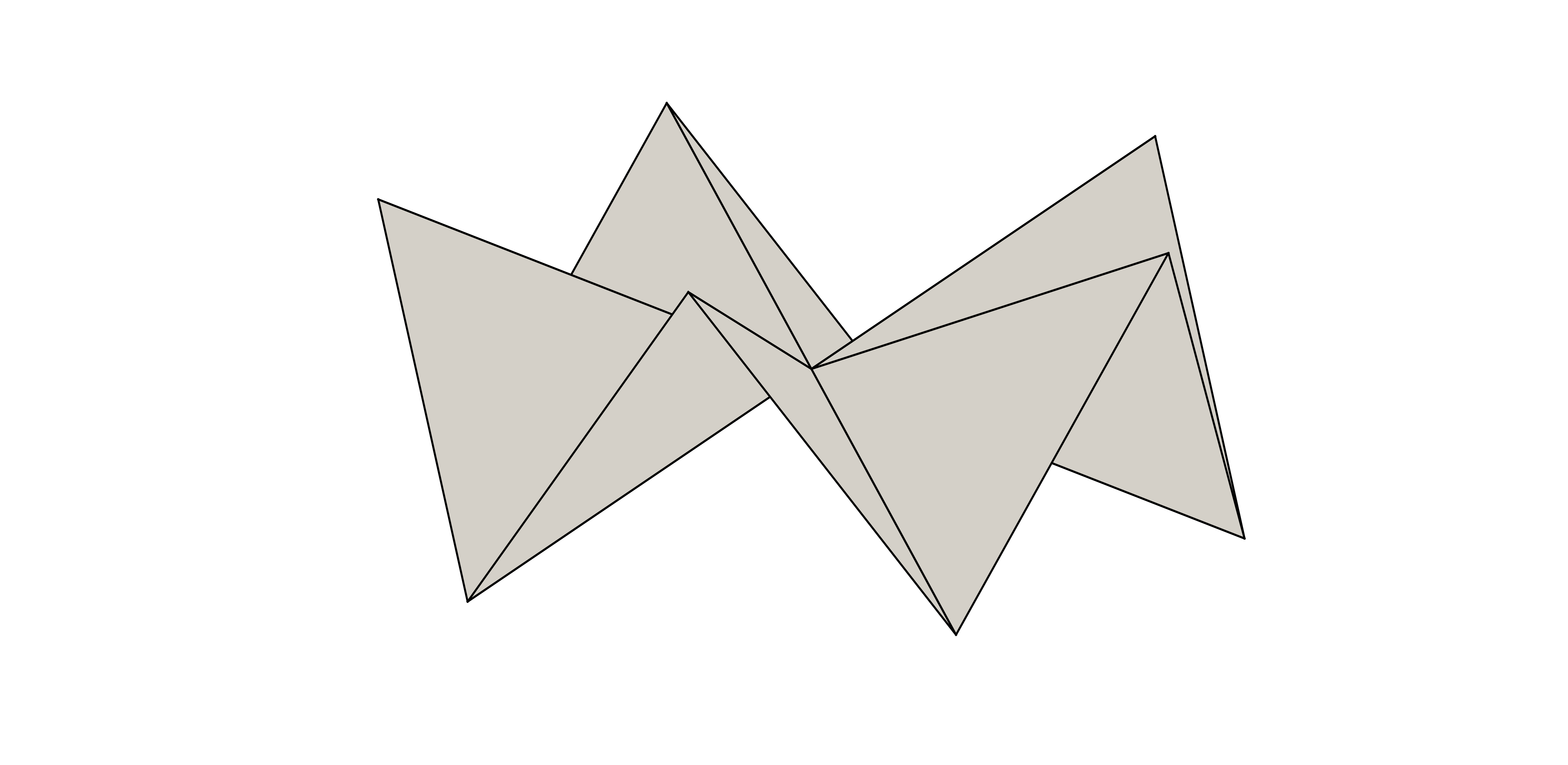}
\includegraphics[width=0.3\textwidth]{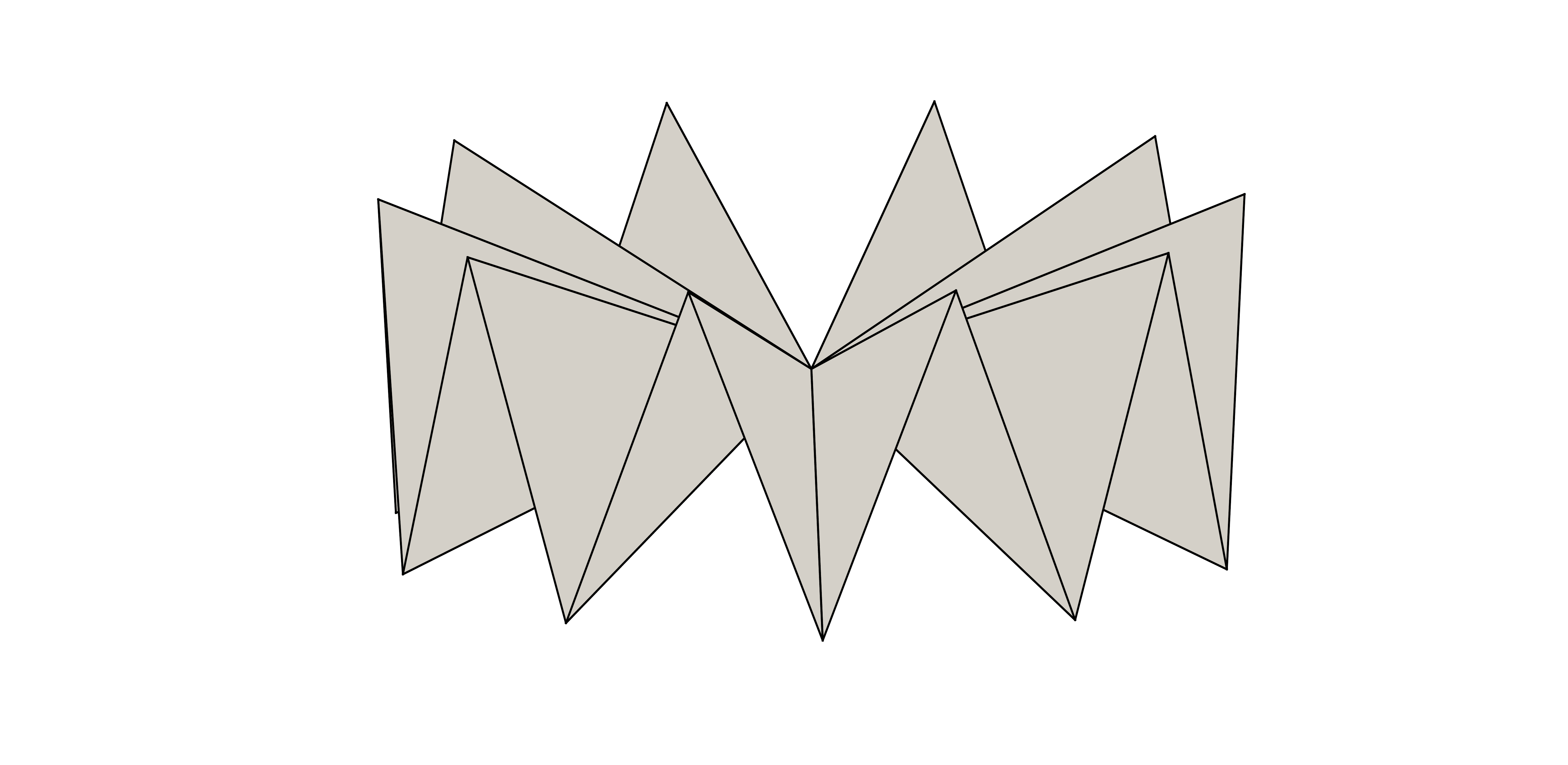}}
\caption{
Two different configurations when $n=2$ illustrating that the number of elements sharing the same vertex could be arbitrarily large even when using triangles satisfying \eqref{e:shape_reg_init} with a uniform constant $\overline{c}$.} \label{f:valence}
\end{figure}


We also extend $\bXi_T = \bP \circ\overline{\bX}_T:\widehat{\omega}_T \to\widetilde{\omega}_T$ to be a local parametrization of $\gamma$ which is bi-Lipschitz with element scaling.  Namely, there exists
a universal constant $L\geq1$ such that for each fixed $\overline{T} \in \overline{\T}$ and for all $\tilde \bx, \tilde \by \in \widetilde{\omega}_T$, 
\begin{equation}\label{bi_lipschitz}
h_T L^{-1} |\hat{\bx}-\hat{\by}| 
\leq |\tilde \bx-  \tilde \by | 
\leq L h_T |\hat{\bx}-\hat{\by}|, 
~~ \textrm{where }  \tilde \bx = \bXi_T(\hat{\bx}), \ \tilde \by = \bXi_T(\hat{\by}). 
\end{equation}
The collection of these parametrizations is denoted $\bXi $, i.e. $\bXi = \{ \bXi_T \}_{\overline{T} \in \overline{\T}}$.
We further assume that $\bP(\mathbf v) = \mathbf v$ 
for all vertices $\mathbf v$ of $\overline{\Gamma}$, so that $\overline{\bX}_T$ is the nodal interpolant of $\bXi_T$ into linears.  

Finally, we note that a function $v_T:\widetilde{T}\to\mathbb R$ defines uniquely two
functions $\hat{v}_T:\widehat{T}\to\mathbb R$ and $\overline{v}_T:\overline{T}\to\mathbb R$ via the
maps $\bXi_T$ and $\bP$, namely
\begin{align}  \label{para_lift}
\hat{v}_T(\hat{\bx}) := v_T(\bXi_T(\hat{\bx})) \quad \forall\; \hat{\bx}\in
\widehat{T} \qquad\text{ and }\qquad \overline{v}_T(\bar{\bx}) := v_T(\bP(\bar{\bx}))
\quad\forall\; \bar{\bx}\in\overline{T}.
\end{align}
Conversely, a function $\hat{v}_T:\widehat{T}\to\mathbb R$ (respectively,
$\overline{v}_T:\overline{T}\to\mathbb R$) defines uniquely the two functions
$v_T:\widetilde{T}\to\mathbb R$ and $\overline{v}_T:\overline{T}\to\mathbb R$ (respectively,
$v_T:\widetilde{T}\to\mathbb R$ and $\hat{v}_T:\widehat{T}\to{\mathbb R}$).
When no confusion is possible, we will always denote by $v$ the two lifts $\bar{v}$
or $\hat{v}$ of $v:\widetilde{T}\to{\mathbb R}$ and set $\tilde{\bx}_T:=\bXi_T(\hat{\bx})$ for all $\hat{\bx}\in\widehat{T}$.

Before proceeding further, we note that as a general rule, we use hat symbols to denote quantities related to $\widehat{T}$, an overline to refer to quantities on $\overline{\Gamma}$, tilde to characterize quantities in $\gamma$ and bold to indicate vector quantities.

\subsection{Interpolation of Parametric Surfaces and Finite Element Spaces}\label{S:interp--surface}
%

\subsubsection{Finite Element Spaces and Surface Approximations}\label{ss:fem}

For $s\geq 1$, let $\mathcal P_s$ be the space of polynomials of degree at most $s$ and let $I_s: C^0(\widehat{T})\to \mathcal P_s$ be the
corresponding Lagrange interpolation operator. Note that we will use the same notation for the componentwise Lagrange interpolation operator
$I_s: C^0(\widehat{T})^{d}\to (\mathcal P_s)^d$, and also for the naturally defined Lagrange interpolant on the faces of $\overline{\Gamma}$.

We fix $k\geq 1$ and define $\bX_T:=I_k \bXi_T$ to be the (component wise) interpolant of degree at most $k$ of
$\bXi_T$.
Then $T:=T_k := \bX_T(\widehat{T})$ is the piecewise polynomial interpolation
of $\widetilde{T}$ with associated subdivision $\T = \{ \bX_T(\widehat{T})\}_{\overline{T} \in \overline{\T}}$.
The global quantities are  defined by
$$
\bX:=\Big\{ \bX_T \Big\}_{\overline{T} \in \overline{\T}}, \qquad \Gamma := \cup_{\overline{T} \in \overline{\T}} T
$$
and we denote by $S_\T$ the set of interior faces of $\T$.  {Note that may equivalently write $T=(I_k \bP)(\overline{T})$, $\overline{T} \in \overline \T$, where $I_k$ is the Lagrange interpolant naturally defined on the faces of $\gamma$.  In order to define a parametric map from $\Gamma$ to $\gamma$, we let
$$\bP:\Gamma \rightarrow \gamma, \qquad \bP(\bx)=\bP \circ  (I_k \bP)^{-1}(\bx), ~~\bx \in \Gamma.$$
Note that use the same notation for $\bP:\overline{\Gamma} \rightarrow \gamma$ and $\bP:\Gamma \rightarrow \gamma$.  The definitions are consistent, and the domain should be clear from the context.  Thus this ambiguity in notation should cause no confusion.  }

For $T\in \T$, we define its diameter as the diameter of the corresponding element in $\overline{\T}$, i.e.   $h_T = |\overline{T}|^{\frac{1}{n}}$ with
$\overline{T} = (\overline{\bX} \circ \bX^{-1})(T)$.
As for $\overline{\mathcal T}$, we assume that there exists $c>0$ such that
\begin{assumption}\label{e:shape_reg}
c^{-1} {\rm diam}(\overline T) | \bw |  \leq | D \bX_T(\hat x) \bw |  \leq c \hspace{2pt} {\rm diam}(\overline T) | \bw|, \qquad \bw \in \mathbb R^n,
\end{assumption}
i.e. the shape regularity constant $\mathcal T$ is strictly positive.
This can be guaranteed from \eqref{e:shape_reg_init} and assuming that the resolution of $\gamma$ by $\Gamma$ is fine enough \cite{BCMMN16, BCMN:Magenes,BP:11}.
However in this work the focus is on deriving a-posteriori error estimators and we assume \eqref{e:shape_reg} directly. 
Moreover, we do not explicitly mention the dependency on $c$ of the constants appearing in our argumentation below.  Finally, we denote by $\omega_T$ the element patch given by $\omega_T=\bX_T(\widehat{\omega}_T)$.  


We fix $r \geq 1$ and define the finite element space on the approximate surface $\Gamma$ by
\begin{equation}\label{d:fem_space}
\V(\T) :=\left\lbrace V \in C^0(\Gamma) \;\big|\; 
	   V|_{T}  = \widehat V \circ \bX^{-1} \mbox{ for some } \widehat V\in \mathcal P_r, ~T \in \T,~ \int_\Gamma V = 0\right\rbrace.
\end{equation}

At this point we emphasize that the integer $k$ denotes the polynomial degree for the approximation of $\gamma$ by $\Gamma$ while $r$ denotes the polynomial degree for the finite element approximation of $u$.
In general, they do not need to be equal.

\subsubsection{Geometric Estimators}\label{ss:geom_estim}
We define in this section an estimator for the \emph{geometric error} due to the approximation of $\gamma$ by a piecewise polynomial surface $\Gamma$. 
For $T \in \T$, we define the \textit{geometric element indicator}
\begin{equation}
\lambda_T:= \| \smash { \nabla (\bP-I_k \bP)}\|_{ L_\infty(\overline{T})},
\end{equation}
and the corresponding \textit{geometric estimator}
\begin{equation}\label{geo-estimator}
\lambda :=\max_{ T \in \T} \lambda_T.
\end{equation}
In \cite{BCMMN16} an estimator equivalent to $\lambda$ is used in order to control the geometric consistency error.  However,  $\lambda$ has a heuristic a priori order of $h^{k}$ for smooth surfaces, and the corresponding a priori error estimate is \eqref{bad_a_priori}.   In view of \eqref{a_priori}, the geometric error should instead be $O(h^{k+1})$ for smooth surfaces, and we realize that the geometry approximation  is overestimated by $\lambda$ when considering $C^2$ surfaces. 
Let 
\begin{equation}
\label{d:beta}
\beta_T:=\|\bP-I_k \bP\|_{L_\infty(\overline{T})}, ~~\beta:=\max_{T \in \T} \beta_T.
\end{equation}
As we shall see, in this context the appropriate estimator is 
\begin{equation}\label{d:mu}
\mu_T := \beta_T + (\lambda_T)^2, \qquad \mu := 
\max_{T \in \T} \mu_T.
\end{equation}
When $\gamma$ is sufficiently smooth, $\mu \lesssim h^{k+1} + h^{2k} \lesssim h^{k+1}$, so estimating the geometry using $\mu$ preserves the ``superconvergent'' geometric consistency error observed in \eqref{a_priori}.

\subsection{Basic differential geometry}

\subsubsection{The distance function map}\label{ss:distfunc}

The structure of the map $\bP$ depends on the application. 
The most popular is when $\widetilde{T}$
is described on $T$ via the \emph{distance function}
$d:\mathcal N \rightarrow \gamma$.  $d$ is well  defined on a local tubular neighborhood $\mathcal N$ given the smoothness assumption on $\gamma$.  Let 
\begin{equation}\label{e:lift_dist}
\widetilde{T} \ni \tilde{\bx} = \bx - d(\bx) \nabla d(\bx) = : \bP_{d}(\bx)  \qquad
\forall~\bx \in \mathcal N.
\end{equation}
Notice that we introduced the notation $\bP_{d}$ for the lift given by the distance function, i.e. the closest point in $\gamma$, as it will appear multiple times in our analysis below. 
We assume from now on that $\overline{\Gamma} \subset \mathcal N$.
Notice that for $\overline{\bx} \in \overline{\Gamma}$, $\bP(\overline{\bx})$ (defined by a generic $\bchi$) and $\bP_{d}(\overline{\bx})$ are both on $\gamma$ but are not necessarily the same points. 
We postpone the discussion of this point until Section~\ref{ss:res1}, see \eqref{e:closest}.

In this work and as in \cite{BCMMN16, BCMN:Magenes,BP:11}, we do not assume that the distance function is available to the user. 
However, the $C^2$ regularity assumed on $\gamma$ will allow us to use the distance function as a theoretical tool to improve upon the geometric estimators provided in \cite{BCMMN16,BCMN:Magenes}. 

We further describe some basic geometric notions.  Given $\bx \in \mathcal{N}$,  $\bnu (\bx): = \nabla d (\bP_{d}(\bx)) = \nabla d (\bx)$ is the normal to $\gamma$ at the closest point $\bP_d(\bx) \in \gamma$, and $\bW(\bx):=D^2 d(\bx)$ is the Weingarten map.  Let also $\kappa_i(\bx)$, $i =1,.., n$, be the eigenvalues of $\bW(\bx)$.  These are the principal curvatures of $\gamma$ if $\bx \in \gamma$ and of parallel surfaces if $\bx \notin \gamma$.    In addition, we have the relationship 
\begin{equation}\label{kappas}
\kappa_i(\bx) = \frac{\kappa_i(\bP_d(\bx))}{1+d \kappa_i(\bP_d(\bx))}.  
\end{equation}
For the sake of convenience we also define the maximum principal curvature 
\begin{equation}
\label{K:def}
K(x)=\max_{1 \le i \le n} |\kappa_i(x)|, ~~x \in \mathcal{N}. 
\end{equation}
We may now more precisely write
\begin{equation}\label{N:def}
\mathcal{N} = \{ \bx \in \mathbb{R}^{n+1}: {\rm dist}(\bx, \gamma) < \| K \|_{L_\infty(\gamma)}^{-1} \}.
\end{equation}

\subsubsection{Differential operators and area elements}\label{S:diff-geom}
%

In this subsection we recall basic differential geometry notations and definitions and refer to \cite{BCMN:Magenes} for details. 

Let $\bG\in {\mathbb R}^{( n +1)\times n}$ be the matrix
$$
\bG := \bG_{\gamma} := [\widehat{\partial}_1\bXi, \ldots, 
\widehat{\partial}_d\bXi],
$$
whose $j$-th column $\widehat{\partial}_j\bXi\in{\mathbb R}^{n+1}$ 
is the vector of partial derivatives of $\bXi$
with respect to the $j^{th}$ coordinate of $\widehat{T}$.
Note that because $\bXi$ is a diffeomorphism, the set $\{\widehat{\partial}_j\bXi\}_{j=1}^d$ consists of
tangent vectors to $\gamma$ which are linearly
independent and forms a basis of the tangent plane of $\gamma$.
The {\it first fundamental
form} of $\gamma$ is the symmetric and positive definite matrix
$\bg\in{\mathbb R}^{n\times n}$ defined by
\begin{equation}\label{1st-form}
\bg= \big( g_{ij} \big)_{1\leq i,j \leq d} := \big(
\widehat{\partial}_i\bXi^T\widehat{\partial}_j\bXi \big)_{1\leq i,j \leq d} = \bG^T\bG.
\end{equation}
We will also need $\widetilde{\bD} =
\widetilde{\bG}^{-1}$, where
\[
\widetilde{\bG} := \big[ \bG, \bnu^T \big] = \big[\widehat{\partial}_1\bXi, \dots,
\widehat{\partial}_d\bXi, \bnu^T\big],
\]
and  $\bD \in {\mathbb R}^{n\times (n+1)}$ resulting from
$\widetilde{\bD}$ by suppressing its last row.
With this notation, we have for $v : \gamma \to {\mathbb R}$
\begin{equation*}
{\nabla}\hat{v} = \nabla_{\gamma} v\bG \qquad  \nabla_{\gamma} {v} = \nabla \hat{v}\; \bD.
\end{equation*}

With the surface area element defined as
\begin{equation}\label{area}
q := \sqrt{\det \bg},
\end{equation}
the Laplace-Beltrami operator on $\gamma$ is given by
\begin{equation}
\label{lb_def}
\Delta_\gamma v = \frac{1}{q} {\text{div}} 
\big(q {\nabla}\hat{v}\bg^{-1}\big).
\end{equation}
We could alternately compute the elementary area on $\gamma$ via the map $\bP_d \circ \bX$ instead of via $\bXi=\bP \circ \bX$, and we denote by $q_d$ the area element obtained by doing so.

The discussion above applies as well to the piecewise polynomial surface
$\Gamma$ (recall that we dropped the index specifying the considered patch). 
The corresponding matrix quantities are indexed by $\Gamma$, so that
\begin{equation}\label{grad-Gamma}
{\nabla}\hat{v} = \nabla_{\Gamma} v\bG_\Gamma, \qquad  \nabla_{\Gamma} {v} = \nabla \hat{v}\; \bD_\Gamma.
\end{equation}
The first fundamental form and the elementary area on $\Gamma$ are denoted $\bg_\Gamma$ and $q_\Gamma$.
Finally, the Laplace-Beltrami operator on $\Gamma$ reads
\begin{equation}\label{lap-bel}
\Delta_\Gamma v = \frac{1}{q_\Gamma} {\text{div}} 
\big(q_\Gamma {\nabla}\hat{v}\bg_\Gamma^{-1}\big).
\end{equation}

In addition, we recall that for $T\in \T$ and $S$ a side of $T$, the outside pointing unit co-normals $\bn$ of $T$ and $\widehat \bn$ of $\widehat{T}$ are related by the following expressions
\begin{equation}\label{eq:hat_n}
\widehat \bn = \bn \bG_\Gamma \frac{r_\Gamma}{q_\Gamma} \qquad \text{and} \qquad  \bn  = \frac{q_\Gamma}{r_\Gamma} \widehat \bn \bD_\Gamma,
\end{equation}
where $r_\Gamma$ is the elementary area associated with the subsimplex $\widehat S:=\bXi^{-1}(S)$.
Hence, the tangential gradient of $v$ in the direction $\bn$ on $S$ is given by 
\begin{equation}\label{e:normal_grad}
\nabla_\Gamma v \cdot \bn = \frac{q_\Gamma}{r_\Gamma} \nabla  v \bg_\Gamma^{-1}|_{\widehat S} \cdot \widehat \bn.
\end{equation}

\subsection{Constants and inequalities}  We use different sets of constants and notation depending on the situation.  We employ a constant $C$ (or $C_j$) which may depend on the space dimension $n$, the polynomial degrees $r$ and $k$, and shape regularity constants, but is independent of $L$, the discretization parameter $h$, and all other geometric information about $\gamma$.  This notation is used mainly \S\ref{ss:geom_assumptions}.  The notation $a \lesssim b$ means that $a \le D b$ with $D$ depending possibly on the same quantities as $C$ and also on $L$, but independent of other geometric information about $\gamma$ and the discretization parameter $h$.  Finally, by $a \preceq b$ we mean $a \le Gb$ with $G$ independent of the discretization parameter $h$ but possibly depending on geometric information about $\gamma$ and the right hand side $f$ in addition to the quantities hidden in $\lesssim$.  

These different levels of precision with respect to constants reflect three different situations:  In the next subsection we are concerned about verifying assumptions which assure the validity of our estimates and thus are as precise as possible concerning constants, including global constants such as the Lipschitz constant $L$.  Global constants such as $L$ are typically hidden as we do in $\lesssim$ when proving residual-type estimates, but it is often desirable to retain relevant {\it local} geometric information such as curvature variation in our estimates so that this information is taken into account when driving refinement in adaptive FEM.  Finally, it becomes highly technical to reflect local geometric information when proving convergence of AFEM as in \cite{BCMMN16}, so it is sometimes convenient to hide such information as we do in $\preceq$.  

\subsection{Geometric resolution assumptions}\label{ss:geom_assumptions}  In order to prove a posteriori error estimates for surface finite element methods and analyze corresponding adaptive algorithms, it is necessary to assume some resolution of the continuous surface $\gamma$ by its discrete approximation.  We thus make three main assumptions concerning the resolution of $\gamma$ by $\Gamma$ and the discrepancy between $\bP$ and $\bP_d$.  These assumptions require more resolution of $\gamma$ than are needed to prove corresponding a posteriori estimates in \cite{BCMMN16} but are necessary to take advantage of the superior properties of the closest point projection $\bP_d$.  

\subsubsection{Restriction on distance between $\Gamma$ and $\gamma$}
\label{ss:res1}
We start by noting that the closest point property \eqref{e:closest} satisfied by the distance function projection implies
$$
| \bP(\bx) - \bP_{d}(\bx)| \leq | \bP(\bx) - \bx + \bx -\bP_{d}(\bx)|
\leq 2 | \bx -\bP(\bx)| \leq 2 | \bX_T(\hat \bx) - \bchi_T(\hat \bx) |
$$
for $\bx \in T$ and $\hat \bx = \bX^{-1}(\bx)$. 
As a consequence {and using that $ \bX_T(\hat \bx) - \bchi_T(\hat \bx) =(I_k \bP-\bP)\circ \overline{\bX}_T$}, we have that for $\bx \in T$ the discrepancy between the two lifts is estimated by the geometric estimator
\begin{equation}\label{e:lift_disc_est}
| \bP(\bx) - \bP_{d}(\bx)| \leq 2 \beta_T.
\end{equation}

Recalling \eqref{N:def} and \eqref{K:def}, we define for $0 <\alpha \le 1$
\begin{equation}\label{Nalpha:def}
\mathcal{N}_\alpha := \{ \bx \in \mathbb{R}^{d+1}: {\rm dist}(\bx, \gamma) \le \alpha \|K\|_{L_\infty(\gamma)}^{-1}\}.
\end{equation}
and we make the assumption that 
\begin{assumption}
\label{N:assumption}
\Gamma \subset \mathcal{N}_{1/2}.
\end{assumption}
This assumption is sufficient to guarantee that all points $\bx$ appearing in our arguments below remain in $\mathcal{N}$, and that 
\begin{equation}\label{e:kappa_equiv}
\kappa_i(\bx) \simeq \kappa_i(\bP_d(\bx))
\end{equation} 
by \eqref{kappas}.  
Combining \eqref{e:closest} with the observation that $|\bx -\bP(x)| \le \beta$, $\bx \in \Gamma$, we see that \eqref{N:assumption} is implied by 
\begin{assumption}\label{Nalpha:condition}
\beta \le \frac{1}{2} \|K\|_{L_\infty(\gamma)}^{-1},  
\end{assumption}
and we shall assume \eqref{Nalpha:condition} throughout.  {This assumption is verifiable a posteriori.  Recall that $K$ is the maximum over $\gamma$ of the principal curvatures $\kappa_i$, which are in turn the eigenvalues of the Weingarten map ${\bf W} = D \bnu$.  In the most practically relevant case $n=2$, the normal $\bnu$ may in turn be computed from the parametric maps $\bXi$ by taking a cross product of the columns of the matrix ${\bf G}$ and normalizing the result.}

While we do not analyze mesh refinement schemes here, the assumption $\Gamma \subset \mathcal{N}_\alpha$ for $\alpha$ sufficiently small guarantees that mesh refinement carried out using the framework of \cite{BCMMN16} will result in a {\it sequence} of meshes lying in $\mathcal{N}_{\alpha'}$ with $\alpha<\alpha'<\frac 1 2$ sufficiently small.  The threshold value of $\alpha$ depends only on $\|I_k\|_{L_\infty \rightarrow L_\infty}$ and is in principal computable.  This property will be necessary to prove convergence of adaptive FEM using our estimators, which we hope to carry out in future work.

\subsubsection{Restriction on the mismatch between $\bP$ and $\bP_d$}
Our second assumption concerns the mismatch between the parametric map $\bP$ used to describe $\gamma$ in our code and the distance function map $\bP_d$.  We assume that 
\begin{assumption}\label{P_Pd:mismatch}
\bP_d \circ \bP^{-1} (\widetilde{T}) \subset \widetilde{\omega}_T, T \in \T.  
\end{assumption}
In view of \eqref{d:beta}, \eqref{e:lift_disc_est}, and the fact that $\gamma$ is a $C^2$ surface, we have that $|\bx-\bP_d \circ \bP^{-1}(\bx)| \lesssim h^2$ independent of $k$.  Because we expect that ${\rm dist} (\widetilde{T}, \partial \widetilde{\omega}_T) \simeq h_T$, \eqref{P_Pd:mismatch} is satisfied for sufficiently fine meshes.  

As we prove in the following proposition, it is possible to check \eqref{P_Pd:mismatch} computationally without accessing $\bP_d$ if the reference patches $\widehat{\omega}_T$ and constant $L$ in \eqref{bi_lipschitz} are known.  Before stating our proposition, we note that because $\gamma$ is a closed surface, there is a constant $c_{\widehat{T}}$ such that 
\begin{equation}\label{d:comega}
{\rm dist}(\widehat{T}, \partial \widehat{\omega}_T) \ge c_{\widehat{T}}, ~T \in \T.
\end{equation}  
\begin{proposition}
\label{checkable2}
Assume that \eqref{N:assumption} is satisfied.  Then \eqref{P_Pd:mismatch} is satisfied if 
\begin{equation} \label{mismatch:computable}
3 \beta_T \le h_T L^{-1} c_{\widehat{T}}, \quad T \in \T.
\end{equation}
\end{proposition}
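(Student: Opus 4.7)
The plan is to proceed by contradiction using a continuity argument along the closest-point projection of the straight line segment joining $\tilde{\by}=\bP(\bx)$ to $\bP_d(\bx)$, where $\bx = \bP^{-1}(\tilde{\by})\in T$. The hypothesis \eqref{mismatch:computable} provides the slack needed to rule out that this path ever exits the macro patch $\widetilde{\omega}_T$.

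First I would fix $\tilde{\by}\in\widetilde{T}$ and set $\bx=\bP^{-1}(\tilde{\by})\in T$, so \eqref{e:lift_disc_est} yields $|\tilde{\by}-\bP_d(\bx)|\le 2\beta_T$. Then I would consider the line segment $\bs(t):=t\,\bP_d(\bx)+(1-t)\tilde{\by}$ for $t\in[0,1]$. Since both endpoints lie on $\gamma$, we have $\mathrm{dist}(\bs(t),\gamma)\le\min(t,1-t)\cdot 2\beta_T\le \beta_T$, which together with \eqref{Nalpha:condition} guarantees $\bs(t)\in\mathcal N$. Consequently $\sigma(t):=\bP_d(\bs(t))$ is well-defined and continuous on $[0,1]$ with $\sigma(0)=\tilde{\by}$ and $\sigma(1)=\bP_d(\bx)$. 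The key uniform control along this path is the triangle inequality
\begin{equation*}
|\sigma(t)-\tilde{\by}|\le |\sigma(t)-\bs(t)|+|\bs(t)-\tilde{\by}|\le \mathrm{dist}(\bs(t),\gamma)+2t\beta_T\le \beta_T+2t\beta_T,
\end{equation*}
so $|\sigma(t)-\tilde{\by}|<3\beta_T$ whenever $t<1$.

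Next, suppose for contradiction that $\bP_d(\bx)\notin\widetilde{\omega}_T$. Since $\bXi_T:\widehat{\omega}_T\to\widetilde{\omega}_T$ is a homeomorphism from a compact set, $\widetilde{\omega}_T$ is closed in $\gamma$, and since $\tilde{\by}\in\widetilde{T}$ sits at positive distance from $\partial\widetilde{\omega}_T$, the number $t^*:=\sup\{t:\sigma([0,t])\subset\widetilde{\omega}_T\}$ satisfies $t^*\in(0,1]$ with $\sigma(t^*)\in\partial\widetilde{\omega}_T$. Moreover $t^*=1$ would force $\bP_d(\bx)=\sigma(1)\in\partial\widetilde{\omega}_T\subset\widetilde{\omega}_T$, contradicting the assumption; hence $t^*\in(0,1)$ and $|\sigma(t^*)-\tilde{\by}|<3\beta_T$. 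Pulling $\sigma(t^*)$ back to the reference patch and applying the bi-Lipschitz estimate \eqref{bi_lipschitz} together with \eqref{d:comega} gives
\begin{equation*}
|\sigma(t^*)-\tilde{\by}|\ge L^{-1}h_T\,|\bXi_T^{-1}(\sigma(t^*))-\bXi_T^{-1}(\tilde{\by})|\ge L^{-1}h_T\,c_{\widehat{T}},
\end{equation*}
which together with $|\sigma(t^*)-\tilde{\by}|<3\beta_T$ and the hypothesis $3\beta_T\le L^{-1}h_T c_{\widehat{T}}$ yields the desired contradiction.

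The main technical obstacle is organizing the estimate so that the bound on $|\sigma(t)-\tilde{\by}|$ is \emph{strictly} less than $3\beta_T$ precisely at the crossing time $t^*$; this is what makes the sharp constant $3$ in \eqref{mismatch:computable} sufficient. This is resolved by the observation that the endpoint case $t^*=1$ already places $\bP_d(\bx)$ in the closed patch $\widetilde{\omega}_T$, so only $t^*<1$ matters, and there the bound $\beta_T+2t^*\beta_T<3\beta_T$ is strict. A secondary but routine point is the well-definedness of the closest point projection along the entire segment $\bs(t)$, which follows at once from \eqref{Nalpha:condition}.
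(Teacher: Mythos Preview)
Your argument is correct and follows a genuinely different---and somewhat more elementary---route than the paper's. Both proofs project the straight segment between $\tilde{\by}=\bP(\bx)$ and $\bP_d(\bx)$ onto $\gamma$ via $\bP_d$, but they diverge from there. The paper bounds the \emph{geodesic} distance ${\rm dist_g}(\bP(\bx),\bP_d(\bx))$ by the arc length of the projected curve $\rho$, controlling $|\rho'|$ through the explicit eigenstructure of $D\bP_d = \sum_\ell (1-d\kappa_\ell)\,{\bf e}_\ell\otimes{\bf e}_\ell$ and the bound $\|D\bP_d\|_{\ell_2\to\ell_2}\le \tfrac32$ on $\mathcal N_{1/2}$; this is where the factor $3=\tfrac32\cdot 2$ arises in the paper, and it is compared directly with the geodesic lower bound ${\rm dist_g}(\widetilde T,\partial\widetilde\omega_T)\ge h_T L^{-1}c_{\widehat T}$. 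You instead bound the \emph{Euclidean} distance $|\sigma(t)-\tilde{\by}|$ by the triangle inequality and run a continuity/first-exit argument, never differentiating $\bP_d$ at all. Your approach buys a proof that uses only the closest-point property \eqref{e:closest} and the bi-Lipschitz bound \eqref{bi_lipschitz}; the paper's approach yields an intrinsic geodesic estimate and makes transparent why $\mathcal N_{1/2}$ is the natural threshold. Your careful handling of the strict inequality at $t^*<1$ is in fact slightly cleaner than the paper's non-strict final comparison. One cosmetic point: your claim ``$\sigma(t^*)\in\partial\widetilde\omega_T$'' is only needed (and only argued) for $t^*<1$, so it would read more smoothly to dispose of the case $t^*=1$ first via closedness of $\widetilde\omega_T$, and only then assert that the first exit lies on the boundary.
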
 
\begin{proof}
Let ${\rm dist_g}$ denote the geodesic distance on $\gamma$ and ${\rm dist_E}$ the Euclidean distance.  
Then for $\bx, \by \in \gamma$, $|\bx -\by| ={\rm dist_E}(\bx, \by) \le {\rm dist_g}(\bx, \by)$.  Also, if $\hat{x} \in \widehat{T}$ and $\hat{y} \in \partial \widehat{\omega}_T$, we have from \eqref{bi_lipschitz} that $|\chi_T(\hat{\bx}) -\chi_T(\hat{\by})| \ge h_T L^{-1} | \hat{\bx}-\hat{\by}| \ge h_T L^{-1} c_{\widehat{T}}$.  Thus
\begin{equation}
\label{dist1}
{\rm dist_g} (\widetilde{T}, \partial \widetilde{\omega}_T) \ge {\rm dist_E} (\widetilde{T}, \partial \widetilde{\omega}_T) \ge h_T L^{-1} c_{\widehat{T}}.
\end{equation}

Consider now $\bx \in T \subset \Gamma$.  We wish to control ${\rm dist_g} ( \bP(\bx), \bP_d(\bx))$.  Note that the corresponding Euclidean distance is bounded in \eqref{e:lift_disc_est}.  Let $\tilde{\rho}: [0, 1] \rightarrow \mathbb{R}^{n+1}$ be given by $\tilde{\rho}(s)=s \bP(\bx) + (1-s) \bP_d(\bx)$.
A short computation and \eqref{e:lift_disc_est} imply that for every $s\in [0,1]$ 
\begin{equation} \label{distbound}
{\rm dist_E} (\tilde{\rho}(s), \gamma) \le \frac{1}{2} |\bP(\bx)-\bP_d(\bx)| \le \beta_T, 
\end{equation}
so assumption \eqref{Nalpha:condition} implies that $\tilde{\rho}(s) \in \mathcal{N}_{1/2}$, $0 \le s \le 1$.  We also define $\rho : [0,1] \rightarrow \gamma$ by $\rho(s)=\bP_d( \tilde{\rho}(s))$, which is well defined since $\tilde{\rho}(s) \in \mathcal{N}$.  Thus by \eqref{Nalpha:def}, $|d(\tilde{\rho}(s)) \kappa_i (\tilde{\rho}(s))| \le \frac{1}{2}$, $0 \le s \le 1$.  Let $\rho_j$, $1 \le j \le n+1$ be the components of $\rho$.  Then the length of the curve $\rho$ is given by 
\begin{equation}\label{arclength}
{\rm length} (\rho) = \int_0^1 \left [ \sum_{j=1}^{n+1} \left ( \frac{d \rho_j}{d s} \right )^2 \right ]^{1/2} ds.
\end{equation}
But 
$\frac{d \rho_j} {ds} = [D \bP_d (\tilde{\rho}(s))(\bP(\bx) - \bP_d(\bx))]_j$,
so by \eqref{e:lift_disc_est} we have
\begin{equation}\label{drho}
\begin{aligned}
{\rm length} (\rho) & = \int_0^1 \left | D \bP_d (\tilde{\rho}(s))(\bP(\bx) - \bP_d(\bx)) \right | ds 
\\ & \le  \| \|D \bP_d \circ \tilde{\rho} \|_{\ell_2 \rightarrow \ell_2}  \|_{L_\infty(\tilde{\rho}[0,1])} 2 \beta_T.
\end{aligned}
\end{equation}

Because ${\bW} \nabla d=0$, there is an orthonormal basis $\{ \nabla d, {\bf e}_1,..., {\bf e}_n\}$ of $\mathbb{R}^{n+1}$, and $\bW=\sum_{\ell=1}^n \kappa_\ell {\bf e}_\ell \otimes {\bf e}_\ell$.   From \eqref{e:lift_dist}, we thus have that $D \bP_d = I -\nabla d \otimes \nabla d -d \bW= \sum_{\ell=1}^n (1-d \kappa_\ell) {\bf e}_\ell \otimes {\bf e}_\ell$.  For $\bx \in \mathcal{N}_{1/2}$, \eqref{Nalpha:def} yields
$$\|D \bP_d(\bx) \|_{\ell_2 \rightarrow \ell_2} \le \max_{1 \le \ell \le n} |1-d(\bx) \kappa_\ell(\bx)| \le 1+\frac{1}{2} = \frac{3}{2}, $$
which when combined with the observation that $\tilde{\rho}([0,1]) \subset \mathcal{N}_{1/2}$ and inserted into \eqref{drho} yields
\begin{equation} 
{\rm length} ({\rho}) \le 3 \beta_T.
\end{equation}
In particular,  ${\rm dist_g} (\bP(\bx), \bP_d(\bx)) \le {\rm length}(\rho) \le 3 \beta_T,$
which when combined with \eqref{dist1} yields \eqref{mismatch:computable}.
\end{proof}

\subsubsection{Nondegeneracy of area elements}  The Lipschitz property \eqref{bi_lipschitz} implies that on $\widetilde{\omega}_T$,
\begin{equation}
\label{q:nondegen}
L^{-n} (h_T)^n \lesssim q \lesssim L^n (h_T)^n
\end{equation}
We additionally assume that $q_\Gamma$ and $q_d$ are equivalent to $q$ in the sense that
\begin{assumption}
\label{q:nondegen:assume}
C_1 \le \frac{q}{q_\Gamma}, \frac{q_d}{q_\Gamma} \le C_2.
\end{assumption}

As above, we discuss the possibility of computationally checking the assumption \eqref{q:nondegen:assume}.  From \cite[Lemma 4.1]{BCMMN16} and \cite[Lemma 5.5]{BCMN:Magenes}), we have the following.
\begin{lemma} \label{lem:lambda}
Assume that $\lambda$ is sufficiently small with respect to $L$ and other nonessential constants.  Then
\begin{equation} \label{eq100}
C_1 \le \frac{q}{q_\Gamma} \le C_2, ~~\|\bnu\circ \bP - \bnu_\Gamma\|_{L_\infty(T)} \le C L^{2n-1} \lambda_T, ~T \in \mathcal{T}.
\end{equation}
\end{lemma}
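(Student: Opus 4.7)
\medskip

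The plan is to reduce both bounds to the single Lipschitz estimate $\|D\bXi_T - D\bX_T\|_{L_\infty(\widehat{T})}\lesssim h_T\lambda_T$, which follows from the chain rule applied to the identity $\bXi_T - \bX_T = (\bP - I_k\bP)\circ\overline{\bX}_T$: since $\overline{\bX}_T$ is affine with $\|D\overline{\bX}_T\|\lesssim h_T$, the bound drops out directly from the definition of $\lambda_T$. Combined with shape regularity and \eqref{bi_lipschitz}, this gives the baseline estimates $\|\bG\|,\|\bG_\Gamma\|\lesssim Lh_T$ and a matching lower bound on the singular values, so both $q$ and $q_\Gamma$ are comparable to $h_T^n$ up to powers of $L$.

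For the area element ratio, I would expand
\[
\bg - \bg_\Gamma = (\bG-\bG_\Gamma)^T\bG + \bG_\Gamma^T(\bG-\bG_\Gamma),
\]
which together with the pointwise bounds on $\bG$ and $\bG_\Gamma$ yields $\|\bg-\bg_\Gamma\|\lesssim L h_T^2\lambda_T$. Writing $\bg = \bg_\Gamma\bigl(I + \bg_\Gamma^{-1}(\bg-\bg_\Gamma)\bigr)$ and using $\|\bg_\Gamma^{-1}\|\lesssim L^2 h_T^{-2}$, one sees that the perturbation matrix has norm $\lesssim L^{3}\lambda_T$, so for $\lambda$ sufficiently small relative to $L$, the multiplicative determinant formula $\det(\bg)/\det(\bg_\Gamma) = \det(I + O(L^3\lambda_T))$ stays in a fixed interval bounded away from $0$ and $\infty$. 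Taking square roots delivers $C_1\le q/q_\Gamma\le C_2$.

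For the normal comparison, I would use the representation of the unit normal as the normalized wedge (or generalized cross product) of the columns of $\bG$, respectively $\bG_\Gamma$: schematically $\bnu\circ\bP = \bN(\bG)/q$ with $\bN$ the $(n{+}1)$-linear alternating map, and analogously $\bnu_\Gamma = \bN(\bG_\Gamma)/q_\Gamma$. Then
\[
\bnu\circ\bP - \bnu_\Gamma = \frac{\bN(\bG) - \bN(\bG_\Gamma)}{q_\Gamma} + \bN(\bG)\Bigl(\frac{1}{q}-\frac{1}{q_\Gamma}\Bigr).
\]
Multilinearity of $\bN$ gives $\|\bN(\bG)-\bN(\bG_\Gamma)\|\lesssim L^{n-1}h_T^{n-1}\cdot h_T\lambda_T = L^{n-1}h_T^n\lambda_T$, while $|q^{-1}-q_\Gamma^{-1}| \le |q-q_\Gamma|/(qq_\Gamma) \lesssim L^{2n}h_T^{-n}\lambda_T$ using the first part of the lemma, and $\|\bN(\bG)\|\lesssim L^n h_T^n$. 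Dividing by $q_\Gamma \gtrsim L^{-n}h_T^n$ in the first term, the two contributions combine to give a total bound of order $L^{2n-1}\lambda_T$, which is the claimed constant.

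The main obstacle is purely bookkeeping: keeping track of the $L$ powers without over- or undercounting, and in particular verifying that the $L^{2n-1}$ rather than $L^{2n}$ or $L^{2n-2}$ is optimal with the above argument — this requires slightly more care because the difference of wedges involves a mixed product of $n-1$ factors of size $Lh_T$ with one factor of size $h_T\lambda_T$, while the denominator $q_\Gamma$ contributes the $L^n$ that dominates. Apart from this accounting, every step reduces to either the chain rule on $\bXi_T - \bX_T = (\bP - I_k\bP)\circ\overline{\bX}_T$ or a Neumann-series perturbation of $\bg_\Gamma$, which is why the smallness of $\lambda$ relative to $L$ is needed.
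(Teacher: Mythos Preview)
The paper does not actually prove this lemma: immediately before the statement it reads ``From \cite[Lemma 4.1]{BCMMN16} and \cite[Lemma 5.5]{BCMN:Magenes}, we have the following,'' and no argument is given. Your proposal supplies a self-contained proof by perturbation analysis, which is the standard route and almost certainly what the cited references do. The reduction to $\|\bG-\bG_\Gamma\|_{L_\infty(\widehat{T})}\lesssim h_T\lambda_T$ via the chain rule on $(\bP-I_k\bP)\circ\overline{\bX}_T$ is exactly right, and from there the determinant perturbation for $q/q_\Gamma$ and the multilinearity of the generalized cross product for the normals are the natural tools.

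One small remark on the $L$-bookkeeping: in the paper's conventions the constant $C$ absorbs the shape-regularity constant $c$ from \eqref{e:shape_reg}, so $q_\Gamma\simeq h_T^n$ with constants independent of $L$, not $q_\Gamma\gtrsim L^{-n}h_T^n$ as you write. This makes your first term in the normal estimate only $L^{n-1}\lambda_T$; the full $L^{2n-1}$ must then come from the second term, where $q\gtrsim L^{-n}h_T^n$ (from \eqref{bi_lipschitz}) and $|q-q_\Gamma|$ picks up further $L$-factors through the determinant expansion of $\bg$. As you yourself note, this is pure accounting and does not affect the structure of the argument.
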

Thus we shall assume throughout that
\begin{assumption}
\label{lambda:assume}
\lambda \hbox{ is sufficiently small that \eqref{eq100} holds.}
\end{assumption}
We do not further discuss the precise threshold in \eqref{lambda:assume}, but intead refer to \cite{BCMMN16}.

In order to provide a condition for the second assumption in \eqref{q:nondegen:assume}, we first  bound $|\bnu-\bnu_\Gamma|=|\bnu\circ \bP_d -\bnu_\Gamma|$.  We will also use these estimates in later sections.   

\begin{lemma} \label{lem:nubound}
Assume that $T \in \mathcal{T}$, and that the assumptions \eqref{N:assumption}, \eqref{P_Pd:mismatch}, and \eqref{lambda:assume} hold.  Then
\begin{eqnarray}
\label{normal:bound}
\| \bnu - \bnu_\Gamma\|_{L_\infty(T)}  \le C[L^{2n-1} \lambda_T + L^2 \|K\|_{L_\infty(\omega_T)} \beta_T],
\\ \label{normal:bound2} \| \bnu - \bnu_\Gamma\|_{L_\infty(T)}^2  \lesssim (\lambda_T)^2 + \|K\|_{L_\infty(\omega_T)} \beta_T.
\end{eqnarray}
\end{lemma}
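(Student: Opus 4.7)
The plan is to decompose $\bnu-\bnu_\Gamma$ into a piece already controlled by Lemma \ref{lem:lambda} and a piece measuring the mismatch between $\bP$ and $\bP_d$. Recalling that $\bnu(\bx)=\nabla d(\bx)=\bnu(\bP_d(\bx))$ for any $\bx\in\mathcal{N}$, I write, for $\bx \in T$,
\begin{equation*}
\bnu(\bx)-\bnu_\Gamma(\bx) = \bigl(\bnu(\bP_d(\bx))-\bnu(\bP(\bx))\bigr) + \bigl(\bnu(\bP(\bx))-\bnu_\Gamma(\bx)\bigr).
\end{equation*}
Assumption \eqref{N:assumption} ensures $\bx\in\mathcal{N}$ so that $\bP_d(\bx)$ is well-defined, and Assumption \eqref{P_Pd:mismatch} together with \eqref{Nalpha:condition} guarantees that the two points $\bP_d(\bx)$ and $\bP(\bx)$ both lie on $\gamma$ in the patch $\widetilde{\omega}_T$. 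The second summand is directly bounded by $C L^{2n-1}\lambda_T$ via Lemma \ref{lem:lambda}.

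The main work is the first summand. The idea is that $\bnu$ is Lipschitz on $\gamma$ with Lipschitz constant controlled by $K=\|\bW\|$, so it suffices to bound the geodesic distance between $\bP_d(\bx)$ and $\bP(\bx)$. Precisely, I will use the very curve $\rho\colon[0,1]\to\gamma$ already constructed in the proof of Proposition \ref{checkable2}, namely $\rho(s)=\bP_d(s\bP(\bx)+(1-s)\bP_d(\bx))$, which joins $\bP_d(\bx)$ to $\bP(\bx)$ and whose length was shown there to satisfy $\mathrm{length}(\rho)\le 3\beta_T$ (using $\|D\bP_d\|\le 3/2$ on $\mathcal{N}_{1/2}$, a consequence of \eqref{Nalpha:condition}). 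Parameterizing $\bnu$ along $\rho$ and using $D\bnu=\bW$ tangentially on $\gamma$ gives
\begin{equation*}
|\bnu(\bP_d(\bx))-\bnu(\bP(\bx))|\le \int_0^1|\bW(\rho(s))\rho'(s)|\,ds\le \|K\|_{L_\infty(\widetilde{\omega}_T)}\,\mathrm{length}(\rho)\lesssim \|K\|_{L_\infty(\omega_T)}\beta_T,
\end{equation*}
where the final step uses the curvature equivalence \eqref{e:kappa_equiv} to pass from a bound on $\widetilde{\omega}_T$ to one on $\omega_T\subset\mathcal{N}_{1/2}$. Tracking the Lipschitz constants introduces the factor $L^2$ once one carries through the estimates from \eqref{bi_lipschitz} in the surface parametrization, yielding \eqref{normal:bound}.

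The second inequality \eqref{normal:bound2} follows by squaring \eqref{normal:bound}. The only nontrivial point is that $\|K\|_{L_\infty(\omega_T)}^2\beta_T^2$ must be absorbed into $\|K\|_{L_\infty(\omega_T)}\beta_T$. This is immediate from Assumption \eqref{Nalpha:condition}: since $\beta_T\le \beta\le \tfrac{1}{2}\|K\|_{L_\infty(\gamma)}^{-1}$, one has $\|K\|_{L_\infty(\omega_T)}\beta_T\lesssim 1$ and therefore $\|K\|_{L_\infty(\omega_T)}^2\beta_T^2\lesssim \|K\|_{L_\infty(\omega_T)}\beta_T$. All remaining $L$-dependent constants may be hidden inside $\lesssim$ per the conventions of Section \S2.5.

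The main obstacle I anticipate is purely bookkeeping: the geodesic estimate $\mathrm{length}(\rho)\le 3\beta_T$ was already done inside the proof of Proposition \ref{checkable2}, so the only substantive ingredient is the Lipschitz-in-$K$ estimate of the Gauss map on $\gamma$. Making sure the point $\bP(\bx)$ lies in the patch where $\bnu\circ\bP$ has been regularly controlled (so that Lemma \ref{lem:lambda} applies as stated) uses Assumption \eqref{P_Pd:mismatch}; without this, the triangle-inequality decomposition above would not have a well-defined middle point on $\gamma$ in the correct patch.
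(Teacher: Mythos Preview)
Your decomposition $\bnu-\bnu_\Gamma=(\bnu\circ\bP_d-\bnu\circ\bP)+(\bnu\circ\bP-\bnu_\Gamma)$ and use of Lemma~\ref{lem:lambda} for the second piece exactly match the paper. The difference lies in how you connect $\bP(\bx)$ to $\bP_d(\bx)$ on $\gamma$. You reuse the curve $\rho(s)=\bP_d(s\bP(\bx)+(1-s)\bP_d(\bx))$ from Proposition~\ref{checkable2}, whose length is $\le 3\beta_T$. The paper instead pulls both points back to the convex reference patch $\widehat{\omega}_T$ via $\bXi_T^{-1}$, joins them by a straight segment there, and pushes forward by $\bXi_T$; two applications of \eqref{bi_lipschitz} give a curve of length $\le C L^2|\bP(\bx)-\bP_d(\bx)|$, which is the origin of the $L^2$ in \eqref{normal:bound}. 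Your route produces a tighter constant, but it has a localization gap: under assumption \eqref{P_Pd:mismatch} alone you know only that the \emph{endpoints} $\bP(\bx),\bP_d(\bx)$ lie in $\widetilde{\omega}_T$, not that $\rho([0,1])\subset\widetilde{\omega}_T$. Hence the step $\int_0^1|\bW(\rho)\rho'|\le \|K\|_{L_\infty(\widetilde{\omega}_T)}\,\mathrm{length}(\rho)$ is not justified as written; you would need the stronger hypothesis \eqref{mismatch:computable} (so that $3\beta_T\le \mathrm{dist}_g(\widetilde{T},\partial\widetilde{\omega}_T)$) or else replace the local norm by $\|K\|_{L_\infty(\gamma)}$. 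The paper's curve through the convex reference patch avoids this issue automatically, at the cost of the $L^2$ factor.

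For \eqref{normal:bound2} your argument (square \eqref{normal:bound} and absorb $\|K\|_{L_\infty(\omega_T)}^2\beta_T^2\lesssim\|K\|_{L_\infty(\omega_T)}\beta_T$ via \eqref{Nalpha:condition}) is correct. The paper instead bounds $|\bnu\circ\bP_d-\bnu\circ\bP|^2\le 2\,|\bnu\circ\bP_d-\bnu\circ\bP|\lesssim\|K\|_{L_\infty(\omega_T)}\beta_T$ using the trivial bound on the difference of unit vectors; either route works.
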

\begin{proof}
For $\bx \in T$, 
$$
\bnu(\bx) - \bnu_\Gamma(\bx)  = \bnu (\bP_{d}(\bx)) -   \bnu (\bP(\bx))+ \bnu(\bP(\bx)) -  \bnu_\Gamma(\bx).
$$
Employing \eqref{eq100} directly yields 
\begin{equation} \label{bnu1}
\| \bnu\circ \bP - \bnu_\Gamma \|_{L_\infty(T)} \le C L^{2n-1} \lambda_T.
\end{equation}
For the first difference, the assumption \eqref{P_Pd:mismatch} implies  that $\bP(\bx)$ and $\bP_d(\bx)$ lie in the same element patch $\widetilde{\omega}_T$.  Let $\widehat{\omega}_T$ be the reference patch corresponding to $T$, and let $\hat{\bx}=\chi^{-1} (\bP(\bx))$ and $\hat{\bx}_d= \chi^{-1} (\bP_d(\bx))$.  $\widehat{\omega}_T$ is convex, so the line segment between $\hat{\bx}$ and $\hat{\bx}_d$ lies in $\widehat{\omega}_T$ also, and by \eqref{bi_lipschitz} we have $|\hat{\bx}-\hat{\bx}_d| \le L (h_T)^{-1} |\bP(\bx)-\bP_d(\bx)|$.  Using the Lipschitz character of $\chi$, we map this line segment to $\gamma$ and thus obtain a Lipschitz curve ${\bf \xi}$ lying in $\widetilde{\omega}_T$.  Using the arc length formula (cf. \eqref{arclength}) and the Lipschitz bound \eqref{bi_lipschitz}, we find that 
$$
|{\bf \xi}| \le C \int_{\hat{\bx}}^{\hat{\bx}_d} L h_T \le C L^2 |\bP(\bx)-\bP_d(\bx)|.
$$
Letting $D{\bf \xi}$ be the unit tangent vector along the curve ${\bf \xi}$, we have that
\begin{align}
\label{ftoc}
\begin{aligned}
|\bnu (\bP_{d}(\bx)) &  -   \bnu (\bP(\bx))| = \left |\int_{\bf \xi} \bW D {\bf \xi} \right | 
\\ & \le |{\bf \xi}| \| \|\bW\|_{\ell_2 \rightarrow \ell_2}\|_{L_\infty({\bf \xi} )} \le C L^2 |\bP(\bx)-\bP_d(\bx)| \|K\|_{L_\infty(\omega_T)}.
\end{aligned}
\end{align}
Whence, in view of \eqref{e:lift_disc_est}, we get
$$
\|\bnu \circ \bP_{d} -   \bnu \circ \bP\|_{L_\infty(T)}
\le C L^2  \|K \|_{L_\infty(\omega_T)} \beta_T
$$
which along with \eqref{bnu1} yields \eqref{normal:bound}.  
Noting that $\| \bnu \circ \bP- \bnu \circ \bP_d\|_{L_\infty(T)} \le 2$ also trivially yields \eqref{normal:bound2}.  
\end{proof}

The following lemma finally yields a checkable (up to a nonessential constant independent of geometric quantities) condition guaranteeing that the second relationship in \eqref{q:nondegen:assume} holds.
\begin{lemma}  Under the assumptions of Lemma \ref{lem:nubound}, there is a constant $C$ such that
\begin{equation} \label{qdcheck}
\begin{aligned}
\left \| 1-\frac{q_\Gamma}{q_d} \right \|_{L_\infty(T)}  & \le   \frac{C[L^{4n-2} (\lambda_T)^2 + L^4 (\|K\|_{L_\infty(\omega_T)} \beta_T)^2]}{1-C [L^{4n-2} (\lambda_T)^2 + L^4 (\|K\|_{L_\infty(\omega_T)} \beta_T)^2]} 
\\ & ~~~~+ C\|K\|_{L_\infty(\omega_T)} \beta_T.
\end{aligned}
\end{equation}
Thus for $ [L^{4n-2} (\lambda_T)^2 + L^4 (\|K\|_{L_\infty(\omega_T)} \beta_T)^2]$ and $\|K\|_{L_\infty(\omega_T)} \beta_T$ sufficiently small, the second relationship in \eqref{q:nondegen:assume} holds, and
\begin{equation}
\label{qdbound}
\left \| 1-\frac{q_\Gamma}{q_d} \right \|_{L_\infty(T)} \lesssim (\lambda_T)^2 + \|K\|_{L_\infty(\omega_T)}\beta_T.
\end{equation}
\end{lemma}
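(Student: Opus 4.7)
The plan is to obtain an explicit formula for the ratio $q_d/q_\Gamma$ and then split $1-q_\Gamma/q_d$ algebraically so that the two distinct contributions on the right-hand side of \eqref{qdcheck} appear naturally. Using the eigendecomposition $D\bP_d = \sum_{\ell=1}^n(1-d\kappa_\ell){\bf e}_\ell\otimes{\bf e}_\ell$ derived in the proof of Proposition \ref{checkable2}, one can verify the classical surface Jacobian formula
\begin{equation*}
\frac{q_d}{q_\Gamma} = (\bnu\cdot\bnu_\Gamma)\,\prod_{\ell=1}^n(1-d\kappa_\ell).
\end{equation*}
To see this, one computes the Gram determinant of $D\bP_d$ restricted to the tangent plane $T_\bx\Gamma$: expanding an orthonormal tangent frame of $\Gamma$ in the orthonormal basis $\{{\bf e}_1,\ldots,{\bf e}_n,\bnu\}$, the resulting $(n+1)\times(n+1)$ change-of-basis matrix is orthogonal, and a cofactor expansion along the row corresponding to $\bnu_\Gamma$ extracts the factor $\bnu\cdot\bnu_\Gamma$ from the tangential $n\times n$ determinant, while the eigenvalues $(1-d\kappa_\ell)$ contribute the remaining product.

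Next, I would apply the algebraic identity $1 - (AB)^{-1} = (1-A^{-1}) + A^{-1}(1-B^{-1})$ with $A := \bnu\cdot\bnu_\Gamma$ and $B := \prod_\ell(1-d\kappa_\ell)$, so that the two pieces on the right-hand side of \eqref{qdcheck} are estimated separately. For the first piece, the polarization identity $1-\bnu\cdot\bnu_\Gamma = \tfrac{1}{2}|\bnu-\bnu_\Gamma|^2$ combined with the square of \eqref{normal:bound} yields
\begin{equation*}
|1-A| \le C\bigl[L^{4n-2}\lambda_T^2 + L^4(\|K\|_{L_\infty(\omega_T)}\beta_T)^2\bigr],
\end{equation*}
and hence $|1-A^{-1}| = (1-A)/A \le |1-A|/(1-|1-A|)$, which is precisely the fraction appearing in \eqref{qdcheck}. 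For the second piece, the closest-point property \eqref{e:closest} implies $|d(\bx)|\le|\bx-\bP(\bx)|\le\beta_T$, and coupled with $|\kappa_\ell|\le\|K\|_{L_\infty(\omega_T)}$ from \eqref{e:kappa_equiv} this gives $|1-B|\le (1+\|K\|_{L_\infty(\omega_T)}\beta_T)^n - 1 \lesssim \|K\|_{L_\infty(\omega_T)}\beta_T$ for $\|K\|\beta_T$ small. Since under the same smallness $|A|^{-1}\lesssim 1$, the contribution $A^{-1}|1-B^{-1}|$ produces exactly the additive $C\|K\|_{L_\infty(\omega_T)}\beta_T$ term.

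The simplified bound \eqref{qdbound} and the two-sided bound required in \eqref{q:nondegen:assume} then follow routinely: once $[L^{4n-2}\lambda_T^2+L^4(\|K\|\beta_T)^2]$ and $\|K\|\beta_T$ are small enough (an assumption already implicit via \eqref{lambda:assume} and \eqref{Nalpha:condition}), the denominator in \eqref{qdcheck} is bounded below by an absolute constant, and the quadratic term $L^4(\|K\|\beta_T)^2$ is absorbed into the linear term $\|K\|\beta_T$ when $\|K\|\beta_T$ is bounded. The main technical obstacle is justifying the Jacobian formula cleanly: $\bP_d$ maps between two distinct tangent hyperplanes, and the factor $\bnu\cdot\bnu_\Gamma$ (encoding the angle between them) arises only after the orthogonal change-of-basis argument, rather than appearing directly from the eigenvalues of $D\bP_d$. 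Once this formula is in hand, the remainder of the proof is purely algebraic.
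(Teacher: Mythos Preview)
Your proposal is correct and follows essentially the same route as the paper's own proof. The paper cites \cite{DemlowDziuk:07} for the Jacobian identity $q_d/q_\Gamma=(\bnu\cdot\bnu_\Gamma)\prod_\ell(1-d\kappa_\ell)$ rather than rederiving it, and it obtains \eqref{qdbound} by inserting \eqref{normal:bound2} directly into the numerator instead of absorbing $(\|K\|\beta_T)^2$ into $\|K\|\beta_T$ afterward, but the algebraic splitting $1-(AB)^{-1}=(1-A^{-1})+A^{-1}(1-B^{-1})$, the polarization identity $1-\bnu\cdot\bnu_\Gamma=\tfrac12|\bnu-\bnu_\Gamma|^2$, and the use of $|d|\le\beta_T$ via the closest-point property are exactly the ingredients the paper employs. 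One small point to tighten: you bound $|1-B|$ but ultimately need $|1-B^{-1}|$; this requires the additional (easy) observation that $|B|\ge(1/2)^n$ from $|d\kappa_\ell|\le 1/2$ under \eqref{N:assumption}.
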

\begin{proof}
By Proposition 2.1 of \cite{DemlowDziuk:07}, $|\bnu|=|\bnu_\Gamma|=1$, and \eqref{N:assumption}, we have
\begin{equation}
\label{qd:bound1}
\left |\frac{q_d}{q_\Gamma} \right|=|\bnu\cdot \bnu_\Gamma \Pi_{i=1}^n (1-d(x) \kappa_i(x))| \lesssim 1.
\end{equation}
This is the desired upper bound in \eqref{q:nondegen:assume}.  

Again employing Proposition 2.1 of \cite{DemlowDziuk:07}, then using \eqref{kappas}, $|\bnu|=|\bnu_\Gamma|=1$, $|1-\bnu\cdot \bnu_\Gamma|=\frac{1}{2} |\bnu-\bnu_\Gamma|^2$, and $d\kappa_i \le \frac{1}{2} <1$, we have for $x \in \Gamma$ that
\begin{align}
\label{qd:bound}
\begin{aligned}
\Big |1& - \frac{q_\Gamma}{q_d} \Big |=\left |1-\frac{1}{\bnu \cdot \bnu_\Gamma \Pi_{i=1}^n (1-d(x) \kappa_i(x))} \right |
\\& = \left |(1-\frac{1}{\bnu\cdot \bnu_\Gamma}) \frac{1}{\Pi_{i=1}^n (1+d(x) \kappa_i(\bP_d(x)))}- \frac{1}{\Pi_{i=1}^n (1+d(x) \kappa_i(\bP_d(x)))}+1 \right |
\\ & \le C \left | \frac{\bnu\cdot \bnu_\Gamma-1}{\bnu\cdot \bnu_\Gamma-1+1} \right | + C |d (x)K(\bP_d(x))|
\\ & \le C \frac{\frac{1}{2} |\bnu-\bnu_\Gamma|^2}{1-\frac{1}{2} |\bnu-\bnu_\Gamma|^2} + C |d(x)| K(\bP_d(x)).
\end{aligned}
\end{align}

We next estimate $\|d\|_{L_\infty(T)}$.
Recalling the closest point property \eqref{e:closest}, we have for  $\hat{\bx} \in \widehat{T}$ and $\bx = \bX_T(\hat \bx) \in T$
$$
|d(\bx)|  = |\bP_{d}(\bx) - \bx| \leq | \bP(\bx) - \bx| = | \bchi_T(\hat \bx) - \bX_T(\widehat \bx)|
$$
and so
\begin{equation}\label{e:estim_d}
\| d\|_{L_\infty(T)} \leq \|\bchi_T  - \bX_T\|_{L_\infty(\widehat{T})}=\beta_T.
\end{equation}

Inserting the square of \eqref{normal:bound} and \eqref{e:estim_d} into \eqref{qd:bound} and using assumption \eqref{P_Pd:mismatch} to obtain $K(\bP_d(\bx)) \le \|K\|_{L_\infty(\omega_T)}$ completes the proof of \eqref{qdcheck}.  Instead employing \eqref{normal:bound2} in the numerator yields \eqref{qdbound}.  
\end{proof}

We conclude by stating our final assumption:
\begin{assumption}
\label{qd:assume}
\begin{aligned}
L^{4n-2} (\lambda_T)^2  &+ L^4 (\|K\|_{L_\infty(\omega_T)} \beta_T)^2 +\|K\|_{L_\infty(\omega_T)} \beta_T 
\\ & \hbox{ is small enough that \eqref{qdbound} holds.}
\end{aligned}
\end{assumption}



\section{Surface finite element method and a posteriori estimates}\label{S:Laplace-Beltrami}



In this section we first derive a weak formulation of $-\Delta_\gamma u = f$ as well as its finite element counterpart.  We then prove a posteriori estimates under the assumption that $\gamma$ is $C^2$.

\subsection{Variational Formulation and Galerkin Method}\label{S:variational}
%
The space of square integrable functions on $\gamma$, with vanishing mean value is
\[
 L_{2,\#}(\gamma) := \Big\{v \in L_2(\gamma) \;\big|\; \int_\gamma v = 0\Big\}
\]
and its subspace containing square integrable weak derivatives is
\begin{equation*}
 H^1_\#(\gamma) := \Big\{v\in L_{2,\#}(\gamma) \;\big|\; 
         \nabla_{\gamma} v \in [L_2(\gamma)]^{n+1}\}.
\end{equation*}
%

The weak formulation of $-\Delta_\gamma u =f$ is:
For $f\in L_{2,\#}(\gamma)$, seek $u \in H^1_\#(\gamma)$ satisfying
\begin{equation}\label{p:Weak_PdeGm}
 \int_{\gamma} \nabla_{\gamma}u \cdot \nabla_{\gamma} \varphi 
=  \int_\gamma f \, \varphi ,
\qquad \forall \; \varphi \in H^1_\#(\gamma).  
\end{equation}
Existence and uniqueness of a solution $u\in H^1_\#(\gamma)$ is a consequence of
the Lax-Milgram theorem.

On the surface approximation $\T$, the fully discrete problem associated with \eqref{p:Weak_PdeGm} consists 
of finding $U \in \V(\T)$ satisfying
\begin{align}                  \label{FEM:weakform}
\quad \int_{\Gamma} \nabla_{\Gamma} U \cdot
\nabla_{\Gamma} V = \int_{\Gamma} F_\Gamma \, V \qquad \forall \;
V \in\V(\T),
\end{align}
where $F_\Gamma \in L_{2,\#}(\Gamma)$ a suitable approximation of $f$.
The Lax-Milgram theorem again ensures that \eqref{FEM:weakform} admits a unique solution $U$.

Following \cite{BCMN:Magenes,BCMMN16}, we take
\begin{equation}\label{def:F}
  F_\Gamma := f \frac{q}{q_\Gamma},
\end{equation}
which satisfies $\int_\Gamma F_\Gamma = \int_\gamma f = 0$. 
However, in  \cite{BCMMN16, BCMN:Magenes, DemlowDziuk:07}, this choice also leads to no geometric consistency error in the approximation of the right hand side.
In our setting, we recall all the quantities in \eqref{FEM:weakform} are defined using the practical lift $\bP$.
However, in order to take advantage of the super approximation properties of the distance function, they will have to be related to the corresponding quantities defined via the distance lift $\bP_d$. As a consequence, geometric inconsistency will have to analyzed even for the right hand side. 

\subsection{A Posteriori Residual Error Estimators and Oscillations}\label{ss:a-post-def}
In view of the considerations developed in Section~\ref{S:diff-geom}, we define as in \cite{BCMMN16} the PDE error indicator for any $V\in\V(\T)$ by 
\begin{equation*}
\eta_\T (V,F_\Gamma,T)^2 := h_T^2 \| F_\Gamma + \Delta_\Gamma U\|_{L_2(T)}^2 + 
h_T \| \mathcal J(V)\|_{L_2(\partial T)}^2 \qquad\forall\, T\in \T,
\end{equation*}
where for an inter-element face $S \in S_\T$ ($S = T_+ \cap T_-$ with $T_+,T_- \in \T$)
$$
\mathcal J(V)|_S:= \nabla_\Gamma V|_{T_+} \cdot \bn^+ + \nabla_\Gamma V |_{T_-} \cdot \bn^-
$$
and $\bn^{\pm}$ are the outward pointing unit co-normal of $T^\pm$.
The global estimator is the $l_2$ sum of the local quantities, i.e.
$$
\eta_\T (V,F_\Gamma)  := \left( \sum_{T \in \T} \eta_\T (V,F_\Gamma,T)^2 \right)^{1/2}.
$$
We also introduce the {\it oscillation} for any integer $m \geq 1$, $m' \geq 0$,  $V\in \V(\T)$ and $T \in \T$
\begin{equation}\label{d:osc-def}
\begin{split}
& \osc_{\T}(V,f,T)^2  :={} h_T^2 \Big\| 
      (\text{id} - \Pi^2_{m})
      \left(fq + {\text{div}} 
       \big( q_\Gamma {\nabla} V \bG_\Gamma^{-1} \big)\right)
       \Big\|_{L_2(\widehat{T})}^2\\
    & \qquad + h_T
        \Big\| (\text{id} - \Pi^2_{{m'}}) \left(q_\Gamma^+ {\nabla} V^+ (\bG_{\Gamma}^+)^{-1}\widehat{\bn}^+
    + q_\Gamma^-{\nabla} V^- (\bG_{\Gamma}^-)^{-1}\widehat{\bn}^- \right)
    \Big\|_{L_2(\partial \widehat{T})}^2,
\end{split}
\end{equation}
where $\widehat \bn^{\pm}$ is defined according to \eqref{eq:hat_n},
$\bg_\Gamma^\pm$ and $q_\Gamma^\pm=\sqrt{\det \bg_\Gamma^\pm}$
are the first fundamental form and
area element associated to $T^\pm$, and $\Pi^p_{m}$ denotes the best $L_p$-approximation operator onto the 
space $\mathbb P^m$ of polynomials of degree $\leq m$; the domain is implicit from the context.
As for the estimator, we define the global quantity
$$
\osc_{\T}(V,f) = \left( \sum_{T \in \T} \osc_{\T}(V,f,T)^2 \right)^{1/2}.
$$

 In \cite{BCMMN16}, $m=2r-2$ and $m'=2r-1$ is advocated to guarantee that the oscillations decay faster provided the surface parametrization has appropriate piecewise Besov regularity.
In turn, this Besov regularity matches the regularity needed for the adaptive algorithm to deliver optimal rate of convergence when using $\lambda$ as geometric estimator.
In contrast, since our new geometric estimator asymptotically scales like $\mu \approx \lambda^2+\beta$,  less Besov regularity might be required of the parametrization to deliver the same rate of convergence.
In any event, we leave the choice of $m$ and $m'$ open for further studies on the optimality of the proposed algorithm but note that the constants appearing below might depend on these parameters.

\subsection{Geometric Inconsistencies}
The approximation of $u$ in \eqref{p:Weak_PdeGm} by $U$ in \eqref{FEM:weakform} depends on the approximation of $\gamma$ by $\Gamma$ (geometric approximation) and the approximation of $u \circ \bP_{d}$  by $U$ on $\Gamma$ (or equivalently of $u$ by $U\circ \bP_d^{-1}$).  
This is reflected in the error equation we propose to derive now.
We start with the error
\begin{equation}\label{e:error_sup}
\| \nabla_\gamma (u-U\circ \bP_{d}^{-1}) \|_{L_2(\gamma)}  = \sup_{v \in H^1_\#(\gamma), \ \| \nabla_\gamma v\|_{L_2(\gamma)} = 1} \int_\gamma \nabla_\gamma (u-U\circ\bP_{d}^{-1}) \cdot \nabla_\gamma v
\end{equation}
and work on the integral term.
Using the relation \eqref{p:Weak_PdeGm} defining $u$, we write
$$
 \int_\gamma \nabla_\gamma (u-U\circ\bP_{d}^{-1}) \cdot \nabla_\gamma v = \int_\gamma f v - \int_\gamma \nabla_\gamma (U\circ\bP_{d}^{-1}) \cdot \nabla_\gamma v.
$$
In order to use the relation \eqref{FEM:weakform} satisfied by $U$, one needs to write the above two integrals on $\Gamma$ using the change of variables induced by $\bP_{d}$. 
For the first integral this leads to
$$
\int_\gamma f v = \int_\Gamma f_d v_d \frac{q_d}{q_\Gamma},
$$
where $v_d = v \circ \bP_{d}$, $f_d = f \circ \bP_{d}$. 
The change of variables in the second integral is more involved, reading
$$
 \int_\gamma \nabla_\gamma (U\circ\bP_{d}^{-1}) \cdot \nabla_\gamma v =  \int_\Gamma \nabla_\Gamma U \cdot \nabla_\Gamma v_d 
 - \int_\gamma \nabla (U\circ \bP_{d}^{-1})^T \bE_\Gamma^d \nabla_\gamma v
$$
with 
\begin{equation}\label{d:E}
\bE_\Gamma^{d} := \frac{q_\Gamma}{q_d}  \bAg  \left(I-d \bW\right)\bAG \left(I-d \bW\right)\bAg-\bAg.
\end{equation}
Here $\bAg=(I-\bnu \otimes \bnu)$, $\bAG=(I-\bnu_\Gamma \otimes \bnu_\Gamma)$, and the other relevant geometric quantities were defined in Section \ref{ss:distfunc}.  Also, the quantities $d$ and $\bW$ appearing above are evaluated on $\Gamma$.  
Adding and subtracting $\int_\Gamma F v_d$ from the integral term in \eqref{e:error_sup}, we get
\begin{equation}\label{e:error}
\begin{split}
 \int_\gamma \nabla_\gamma (u-U\circ\bP_{d}^{-1}) \cdot \nabla_\gamma v
 =  &\underbrace{\int_\Gamma F v_d -  \int_\Gamma \nabla_\Gamma U \cdot \nabla_\Gamma v_d}_{=: I(v_d)}  + \underbrace{\int_\Gamma f_d v_d \frac{q_d}{q_\Gamma} - \int_\Gamma F v_d}_{=:II(v_d)} \\
 &+ \underbrace{\int_\gamma \nabla (U\circ \bP_{d}^{-1})^T \bE_\Gamma^d \nabla_\gamma v}_{=:III(v)}.
 \end{split}\end{equation}

We can now state our main result.
\begin{theorem}[Efficiency and Reliability] \label{t:main}
Let $\gamma$ be a closed, compact and $C^2$ hypersurface in $\mathbb R^{n+1}$. Let the assumptions \eqref{e:shape_reg}-\eqref{qd:assume} of \S\ref{S:interp--surface} and \S\ref{ss:geom_assumptions} hold.
Then
$$
\begin{aligned}
\| \nabla_\gamma& (u-U\circ \bP_{d}^{-1}) \|_{L_2(\gamma)} 
\\ & \lesssim \eta_\T(U, F_\Gamma)+ \left ( \sum_{T \in \T}\left [ \left (  \lambda_T^2 + \beta_T \|K\|_{L_\infty(\widetilde{\omega}_T)} \right ) \|\nabla_\Gamma U\|_{L_2(T)} +          \beta_T \|f\|_{L_2(T)}  \right]^2 \right )^{1/2}
\\ & \preceq \eta_\T(U,F_\Gamma) + \mu
\end{aligned}
$$
and
$$
\eta_\T(U,F_\Gamma)  \preceq \| \nabla_\gamma (u-U\circ \bP_{d}^{-1}) \|_{L_2(\gamma)} + \osc_\T(U,f) +  \mu. 
$$
\end{theorem}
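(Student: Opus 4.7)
My plan is to bound both directions through the three-term error identity \eqref{e:error}. Fix $v \in H^1_\#(\gamma)$ with $\|\nabla_\gamma v\|_{L_2(\gamma)}=1$ and set $v_d := v\circ \bP_d$. For the \emph{reliability} direction I would bound $I(v_d)$, $II(v_d)$ and $III(v)$ separately. The residual term $I(v_d)$ is handled by the standard Verf\"urth argument: insert a Scott--Zhang quasi-interpolant $V_d\in\V(\T)$ of $v_d$ built patchwise from the reference maps $\bX_T$, use Galerkin orthogonality \eqref{FEM:weakform}, integrate by parts elementwise, and apply standard interpolation bounds on $\widehat T$ transported by \eqref{e:shape_reg}. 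The only non-standard adjustment is $\|\nabla_\Gamma v_d\|_{L_2(\Gamma)} \lesssim \|\nabla_\gamma v\|_{L_2(\gamma)}$, which follows from the change of variables $\bP_d$ and its uniform bi-Lipschitz character on $\mathcal N_{1/2}$ already used in Proposition~\ref{checkable2}.

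The geometric terms $II(v_d)$ and $III(v)$ are where the improved estimator order $\mu_T=\beta_T+\lambda_T^2$ appears. I would split
\begin{equation*}
II(v_d) = \int_\Gamma v_d\,(f\circ \bP_d - f\circ \bP)\,\frac{q_d}{q_\Gamma} + \int_\Gamma v_d\,(f\circ \bP)\left(\frac{q_d}{q_\Gamma}-\frac{q}{q_\Gamma}\right),
\end{equation*}
bounding the second integral directly via \eqref{qdbound}. The first integral cannot be estimated by shifting $f$, since $f\in L_2$ only, so instead I would change variables to rewrite it as $\int_\gamma f\,(v - v\circ\bP_d\circ\bP^{-1})$ and invest regularity on $v$ by integrating $\nabla_\gamma v$ along the short geodesic arcs of length $\lesssim \beta_T$ constructed in the proof of Proposition~\ref{checkable2}, yielding the desired factor $\beta_T\|f\|_{L_2(T)}$. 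For $III(v)$, I would expand \eqref{d:E} and combine the squared normal error \eqref{normal:bound2} on $\|\bAg-\bAG\|_{L_\infty(T)}$, the distance bound \eqref{e:estim_d}, and \eqref{qdbound} to obtain $\|\bE_\Gamma^d\|_{L_\infty(T)}\lesssim \lambda_T^2+\|K\|_{L_\infty(\omega_T)}\beta_T$; the squared bound \eqref{normal:bound2} is essential, as it is what converts $\lambda_T$ into the superconvergent $\lambda_T^2$ on $C^2$ surfaces. Summing the three contributions with Cauchy--Schwarz produces the sharp first estimate; the coarser bound by $\mu$ then follows by pulling $\|\nabla_\Gamma U\|_{L_2(\Gamma)}$ and $\|f\|_{L_2(\gamma)}$ out globally into the $\preceq$ constant.

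For the \emph{efficiency} direction I would apply the Verf\"urth bubble-function technique rerouted through $\bP_d$. For each $T\in\T$, multiply the polynomial $L_2$-projection of $F_\Gamma q + \operatorname{div}(q_\Gamma\nabla U\bg_\Gamma^{-1})$ (the one dictated by \eqref{d:osc-def}) by an interior bubble on $\widehat T$, transfer to $\gamma$ via $\bP_d^{-1}$, use as a test function in \eqref{p:Weak_PdeGm}, and compare with \eqref{FEM:weakform}; what remains is the true energy error together with the geometric consistency contributions $II$ and $III$, the latter already bounded by $\mu$, plus the oscillation absorbing the polynomial projection. Edge jumps in $\mathcal J(U)$ are handled analogously with edge bubbles on interior faces. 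The main obstacle, in my view, is precisely the treatment of $II(v_d)$ under $L_2$-only data: one cannot shift $f$, and to keep the sharp local factor $\beta_T$ rather than the suboptimal $h_T\lambda_T$ one must displace $v$ on $\gamma$ by at most $\beta_T$ using the closest-point property \eqref{e:closest} together with a Fubini/tube argument. This is precisely the step at which the theoretical use of $\bP_d$ pays off concretely and yields the new estimator $\mu$.
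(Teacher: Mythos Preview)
Your overall architecture is the paper's: split via \eqref{e:error} into $I,II,III$, handle $I$ by the standard residual argument (Lemma~\ref{L:upper-lower}), and bound $III$ via $\|\bE_\Gamma^d\|_{L_\infty(T)}\lesssim \lambda_T^2+\|K\|_{L_\infty(\omega_T)}\beta_T$ exactly as you describe. The efficiency direction also matches: the paper simply combines the lower bound \eqref{lower} with the already-established bounds on $II$ and $III$.

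Two points on $II$ deserve comment. First, your proposed splitting is suboptimal: the piece $\int_\Gamma v_d\,(f\circ\bP)\bigl(\tfrac{q_d}{q_\Gamma}-\tfrac{q}{q_\Gamma}\bigr)$ is \emph{not} controlled by \eqref{qdbound}, which bounds $|1-q_\Gamma/q_d|$, not $|q_d-q|/q_\Gamma$. A direct estimate of $|q-q_\Gamma|$ only gives $O(\lambda_T)$, which would destroy the superconvergent order. The paper avoids this entirely by not splitting: it maps each of the two integrals in $II$ back to $\gamma$ via its own lift ($\bP_d$ for the first, $\bP$ for the second), obtaining the clean identity
\[
II(v_d)=\int_\gamma f\,\bigl(v - v\circ\bP_d\circ\bP^{-1}\bigr),
\]
with no area-element residual at all (Lemma~\ref{lem:data_cons}).

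Second, to bound $\|v-v\circ\bP_d\circ\bP^{-1}\|_{L_2(\widetilde T)}$ the paper does \emph{not} integrate $\nabla_\gamma v$ along geodesic arcs. Your Fubini/tube argument would require the intermediate maps $x\mapsto x+t(\hat\psi(x)-x)$ to be uniformly bi-Lipschitz, i.e.\ control on $D\hat\psi$, which is nowhere established here. Instead the paper pulls back to the flat reference patch $\widehat\omega_T$, extends $\hat w$ to $H^1(\mathbb R^n)$, mollifies at scale $\epsilon=\|I-\hat\psi\|_{L_\infty(\widehat T)}\lesssim h_T^{-1}\beta_T$, and uses a finite-overlap covering by $\epsilon$-balls together with the inverse-type bound \eqref{e:molifier2} (Lemma~\ref{l:geom_err_func}). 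This argument needs only the $L_\infty$ displacement of $\hat\psi$, not its derivative, which is precisely the information available from \eqref{e:lift_disc_est}.
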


Its proof directly follows upon estimating $I(v_d)$, $II(v_d)$ and $III(v_d)$.   This is the subject of Sections~\ref{ss:apost}, \ref{ss:geomconst} and \ref{ss:data} below.  


\subsection{Term I: A-posteriori Estimators} \label{ss:apost}
  
The first term in the right hand side of \eqref{e:error} is  estimated by the estimator $\eta_\T$  and is the focus of this section. 
We first take advantage of the relation \eqref{FEM:weakform} satisfied by the finite element solution $U$ to write
\begin{equation}\label{e:termItoBound}
I(v_d) = \int_\Gamma F (v_d-V) -  \int_\Gamma \nabla_\Gamma U \cdot \nabla_\Gamma (v_d-V)
\end{equation}
for any $V \in \V(\T)$.  \eqref{e:termItoBound} is free of geometric approximation and standard arguments \cite{AO00,Vr96} to derive upper and lower bounds for 
the energy error on flat domains can be extended to this case; see \cite{BCMMN16, BCMN:Magenes, DemlowDziuk:07,MMN:11}.  With the notations introduced in Section~\ref{ss:a-post-def}, we have the following a posteriori error estimation result.
Its proof is omitted as it is in essence Lemma~4.5 in \cite{BCMMN16}.

\begin{lemma}[A posteriori upper and lower bounds]\label{L:upper-lower}
\begin{equation}\label{upper}
\sup_{v \in H^1_\#(\gamma), \ \| \nabla_\gamma v\|_{L_2(\gamma)} = 1} I(v \circ \bP_{d}) 
\lesssim \eta_\T(U,F)
\end{equation}
and
\begin{equation}
\label{lower}
\eta_\T(U,F_\Gamma) 
\lesssim  
\sup_{v \in H^1_\#(\gamma), \ \| \nabla v\|_{L_2(\gamma)} = 1} I(v \circ \bP_{d}) + \osc_{\T}(U,f).
\end{equation}
\end{lemma}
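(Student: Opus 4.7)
The key observation is that $I(v_d)$ is the standard Galerkin residual on the discrete surface $\Gamma$: it is a bilinear functional of the exact finite element residual measured against the test function $v_d=v\circ\bP_d$, and the geometry of $\gamma$ enters only through $v_d$ itself. All the essential a posteriori machinery therefore lives on $\Gamma$, and what must be done is to transfer norms of $v_d$ back and forth between $\Gamma$ and $\gamma$ using that $\bP_d$ is bi-Lipschitz (a consequence of \eqref{N:assumption}, Lemma~\ref{lem:nubound}, and \eqref{q:nondegen:assume}). In particular, one has $\|\nabla_\Gamma v_d\|_{L_2(\Gamma)}\lesssim \|\nabla_\gamma v\|_{L_2(\gamma)}$ and the reverse estimate, so norms of $v_d$ scale like the single parameter we are controlling in \eqref{e:error_sup}.

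For the upper bound \eqref{upper}, the plan is to use Galerkin orthogonality in \eqref{e:termItoBound} with $V$ a Cl\'ement/Scott--Zhang type quasi-interpolant of $v_d$ in $\V(\T)$ defined patchwise through the universal parametric domains $\widehat\omega_T$. Integrating by parts elementwise on $\Gamma$ yields
\[
I(v_d)=\sum_{T\in\T}\int_T (F_\Gamma+\Delta_\Gamma U)(v_d-V)\;+\;\sum_{S\in S_\T}\int_S \mathcal J(U)(v_d-V),
\]
so standard scaled interpolation estimates $\|v_d-V\|_{L_2(T)}\lesssim h_T\|\nabla_\Gamma v_d\|_{L_2(\omega_T)}$ and $\|v_d-V\|_{L_2(\partial T)}\lesssim h_T^{1/2}\|\nabla_\Gamma v_d\|_{L_2(\omega_T)}$, followed by Cauchy--Schwarz and finite overlap of patches, give the bound by $\eta_\T(U,F_\Gamma)\,\|\nabla_\Gamma v_d\|_{L_2(\Gamma)}$; finally $v_d\leftrightarrow v$ yields \eqref{upper}.

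For the lower bound \eqref{lower}, the plan is the classical Verf\"urth bubble-function argument carried over to $\Gamma$ using the universal reference simplex $\widehat T$. Multiply the polynomial $L_2$-projection of $(fq+\mathrm{div}(q_\Gamma \nabla U\bG_\Gamma^{-1}))$ by the interior bubble on $\widehat T$, pull back to $T$, insert as test function in $I(\cdot)$, and combine inverse estimates with the equivalence of area elements \eqref{q:nondegen:assume} to bound the volume term of $\eta_\T$ by $\|\nabla_\gamma(u-U\circ\bP_d^{-1})\|_{L_2(\gamma)}$ plus the volume part of $\osc_\T$. The jump term is handled analogously with an edge bubble and a harmonic/polynomial extension across the shared face. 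The non-polynomial geometric factors $q,q_\Gamma,\bG_\Gamma$ are precisely the reason the un-projected residual cannot be absorbed into the energy error; this gap is what is packaged into the oscillation $\osc_\T(U,f)$.

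The main obstacle is bookkeeping rather than analysis. On the upper-bound side it is the construction of a Cl\'ement-type interpolant of $v_d$ through the local maps $\bXi_T$ on the patches $\widehat\omega_T$, so that the approximation error can legitimately be measured patchwise on $\Gamma$ after exploiting \eqref{bi_lipschitz}; on the lower-bound side it is the need to push and pull bubble functions between $\widehat T$, $T\subset\Gamma$, and $\widetilde T\subset\gamma$ while controlling the constants through the equivalences provided by Lemmas~\ref{lem:lambda}--\ref{lem:nubound}. Because the argument is essentially identical to \cite[Lemma~4.5]{BCMMN16}, where these technicalities are carried out in detail, one can invoke that lemma directly rather than repeat the calculations.
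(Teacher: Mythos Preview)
Your proposal is correct and matches the paper's own treatment: the paper omits the proof entirely and simply refers to \cite[Lemma~4.5]{BCMMN16}, which is precisely the argument you have sketched (Cl\'ement/Scott--Zhang interpolation for the upper bound, Verf\"urth bubble functions for the lower bound, all transported through the reference maps $\bXi_T$ and the area-element equivalences). One small phrasing point: in the lower bound the target quantity is $\sup_v I(v\circ\bP_d)$, not $\|\nabla_\gamma(u-U\circ\bP_d^{-1})\|_{L_2(\gamma)}$; in the bubble argument you therefore bound $I(b_T)$ directly by $\bigl(\sup_v I(v\circ\bP_d)\bigr)\,\|\nabla_\gamma(b_T\circ\bP_d^{-1})\|_{L_2(\gamma)}$ rather than by the energy error, but the mechanics are otherwise exactly as you describe.
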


\subsection{Term III: Geometric Inconsistency in the Dirichlet form} \label{ss:geomconst}

We discuss in this section the geometric inconsistencies appearing when approximating the PDE \eqref{p:Weak_PdeGm} on an approximated surface $\Gamma$.
As indicated by the error equation \eqref{e:error}, we need to analyze the geometric consistency for the distance lift, which is provided in \cite{DemlowDziuk:07}.
Note that critical to taking advantage of the superconvergent properties of the distance lift is that the consistency error induced by using the practical lift does not appear. 

Following \eqref{e:error}, appropriately bounding Term III reduces to bounding $\|\bE_\Gamma^d\|_{L_\infty}$ defined by \eqref{d:E}.  We carry this out in the following lemma.

\begin{lemma}[Geometric Consistency For the Distance Lift]
If the assumptions \eqref{e:shape_reg}-\eqref{qd:assume} of \S\ref{S:interp--surface} and \S\ref{ss:geom_assumptions} hold, then for  $T = \bX_T(\widehat{T})$ we have
\begin{equation} \label{e:bound:as}
 \| \bE^{d}_\Gamma \|_{L_\infty(T)} \lesssim (\lambda_T)^2+\|K\|_{L_\infty(\omega_T)} \beta_T \preceq  \mu_T,
 \end{equation}
and 
$$
\begin{aligned}
\sup_{v \in H^1_\#(\gamma), \ \| \nabla v\|_{L_2(\gamma)} = 1} III(v)  & \lesssim \left (  \sum_{T \in \mathcal{T}} \left ( \left [(\lambda_T)^2+\|K\|_{L_\infty(\omega_T)} \beta_T \right ] \|\nabla_\Gamma U\|_{L_2(T)} \right ) ^2 \right ) ^{1/2} 
\\ & \preceq  \mu.
\end{aligned}
$$
\end{lemma}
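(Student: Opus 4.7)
The plan is to first reduce the estimate on $\|\bE^d_\Gamma\|_{L_\infty(T)}$ to bounds on three already-controlled quantities, $|1-q_\Gamma/q_d|$, $|d|$, and $|\bnu-\bnu_\Gamma|$, using the projection identity $\bAg^2=\bAg$ to extract a \emph{quadratic} dependence on the normal discrepancy. Write
\[
\bE^d_\Gamma \;=\; \Bigl(\tfrac{q_\Gamma}{q_d}-1\Bigr)\,\bAg(I-d\bW)\bAG(I-d\bW)\bAg \;+\; \bAg\bigl[(I-d\bW)\bAG(I-d\bW) - \bAg\bigr]\bAg,
\]
using $\bAg=\bAg\bAg\bAg$ in the last term. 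The first summand is controlled in $L_\infty(T)$ by \eqref{qdbound} together with the uniform bounds $\|\bAg\|,\|\bAG\|\le 1$, $\|d\bW\|_{L_\infty(T)}\lesssim \|K\|_{L_\infty(\omega_T)}\beta_T$ (from \eqref{e:estim_d} and \eqref{e:kappa_equiv}), which is bounded by a constant under the smallness assumption \eqref{qd:assume}. For the second summand, expand
\[
(I-d\bW)\bAG(I-d\bW) - \bAg \;=\; (\bAG-\bAg) \;-\; d\,(\bW\bAG+\bAG\bW) \;+\; d^2\bW\bAG\bW.
\]
The two $d$-weighted terms contribute at most $\|K\|_{L_\infty(\omega_T)}\beta_T + \|K\|_{L_\infty(\omega_T)}^2\beta_T^2$, the second of which is higher order. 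The key step is the first difference: since $\bAg\bnu=0$,
\[
\bAg(\bAg-\bAG)\bAg \;=\; -\bAg(\bnu_\Gamma\otimes\bnu_\Gamma)\bAg \;=\; -(\bAg\bnu_\Gamma)\otimes(\bAg\bnu_\Gamma),
\]
and a short computation gives $|\bAg\bnu_\Gamma|^2 = 1-(\bnu\cdot\bnu_\Gamma)^2 \le |\bnu-\bnu_\Gamma|^2$. Inserting \eqref{normal:bound2} then yields the crucial \emph{quadratic} bound $\|\bAg(\bAg-\bAG)\bAg\|_{L_\infty(T)}\lesssim (\lambda_T)^2+\|K\|_{L_\infty(\omega_T)}\beta_T$. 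Combining, we obtain \eqref{e:bound:as}; the $\preceq \mu_T$ follows by absorbing $\|K\|_{L_\infty(\omega_T)}$ into the hidden constant.

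For the bound on $III(v)$, I would pass the integral from $\gamma$ to $\Gamma$ via the change of variables $\bP_d$, producing a factor $q_d/q_\Gamma$ that is uniformly bounded by \eqref{q:nondegen:assume}. Under this pullback, $\nabla_\gamma v$ becomes a tangential gradient on $\Gamma$ (composed with a bounded linear transformation built from $D\bP_d$ restricted to the tangent plane), and analogously for $\nabla(U\circ\bP_d^{-1})$, so that the $L_2(\gamma)$ norms of these tangential gradients on each lifted element $\bP_d(T)$ are uniformly equivalent (with constants depending only on curvature bounds) to $\|\nabla_\Gamma U\|_{L_2(T)}$ and $\|\nabla_\gamma v\|_{L_2(\bP_d(T))}$ respectively. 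Applying Cauchy–Schwarz element-by-element with the $L_\infty$ bound \eqref{e:bound:as} in hand gives
\[
|III(v)| \;\lesssim\; \sum_{T\in\T} \bigl[(\lambda_T)^2+\|K\|_{L_\infty(\omega_T)}\beta_T\bigr]\,\|\nabla_\Gamma U\|_{L_2(T)}\,\|\nabla_\gamma v\|_{L_2(\bP_d(T))},
\]
then a final discrete Cauchy–Schwarz in $T$ together with $\|\nabla_\gamma v\|_{L_2(\gamma)}=1$ produces the first displayed inequality. The second inequality $\preceq \mu$ follows by pulling out $\mu = \max_T\mu_T$, absorbing $\|K\|_{L_\infty(\gamma)}$ into $\preceq$, and invoking the standard a priori stability bound $\|\nabla_\Gamma U\|_{L_2(\Gamma)}\lesssim \|F_\Gamma\|_{L_2(\Gamma)}\preceq \|f\|_{L_2(\gamma)}$.

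The main obstacle is the first step: without the projection identity $\bAg^2=\bAg$ and the symmetrization of $\bAG-\bAg$ by pre- and post-multiplying with $\bAg$, the term $\bAG-\bAg$ would contribute only $\|\bnu-\bnu_\Gamma\|_{L_\infty(T)}$, i.e. order $\lambda_T$ rather than $\lambda_T^2$, and the whole ``superconvergent'' $h^{k+1}$ effect would be lost. Making this algebraic cancellation clean while carrying the $d\bW$ factors is what drives the proof; once \eqref{e:bound:as} is in hand, the bound on $III(v)$ is routine.
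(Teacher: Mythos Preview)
Your proposal is correct and follows essentially the same route as the paper. The decomposition of $\bE^d_\Gamma$ into a $(q_\Gamma/q_d-1)$-weighted piece and the difference $\bAg(I-d\bW)\bAG(I-d\bW)\bAg-\bAg$, together with the projection identity $\bAg^2=\bAg$ to expose the rank-one term $(\bAg\bnu_\Gamma)\otimes(\bAg\bnu_\Gamma)$ and hence a quadratic bound in $|\bnu-\bnu_\Gamma|$, is exactly the paper's argument; the paper then invokes \eqref{normal:bound2}, \eqref{e:estim_d}, and \eqref{qdbound} just as you do. (There is an inconsequential sign slip in your key identity: $\bAg(\bAg-\bAG)\bAg=+(\bAg\bnu_\Gamma)\otimes(\bAg\bnu_\Gamma)$, not $-$.) For $III(v)$ the paper simply writes ``follows from the definition of $III(v)$ and a Cauchy--Schwarz inequality'', so your elementwise Cauchy--Schwarz with the norm equivalence $\|\nabla_\gamma(U\circ\bP_d^{-1})\|_{L_2(\bP_d(T))}\simeq\|\nabla_\Gamma U\|_{L_2(T)}$ is just a more explicit rendering of the same step.
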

\begin{proof}
Because $\bAg$ and $\bAG$ are projections, $\|\bAg\|_{\ell_2 \rightarrow \ell_2} \le 1$ and $\|\bAG \|_{\ell_2 \rightarrow \ell_2} \le 1$.  In addition, $\|\bW\|_{\ell_2 \rightarrow \ell_2} \le K$.  By \eqref{N:def} $\|d\bW\|_{\ell_2 \rightarrow \ell_2} \le d K \le 1$ and $\|I-d\bW\|_{\ell_2 \rightarrow \ell_2} \lesssim 1$.  Using these facts easily yields that for $\bx \in \Gamma$,
\begin{equation} \label{matbound1}
\|\bAg (I-d\bW)\bAG (I-d\bW)\bAg \|_{\ell_2 \rightarrow \ell_2}  \lesssim 1.
\end{equation}
Elementary calculations and the assumption \eqref{N:assumption} also yield that
\begin{equation} \label{matbound2}
\begin{aligned}
\|\bAg -\bAg & (I-d\bW) \bAG (I-d\bW)\bAg \|_{\ell_2 \rightarrow \ell_2}  
\\ & \lesssim \|\bAg -\bAg\bAG\bAg\|_{\ell_2 \rightarrow \ell_2} + |d(\bx)|K(\bx)
\\ & =\|(\bnu_\Gamma-(\bnu_\Gamma \cdot \bnu) \bnu) \otimes (\bnu_\Gamma-(\bnu_\Gamma \cdot \bnu) \bnu)\|_{\ell_2 \rightarrow \ell_2} + |d(\bx)|K(\bx)
\\ & \lesssim |\bnu-\bnu_\Gamma|^2 + |d(\bx)| K(\bP_d(\bx)), 
\end{aligned}
\end{equation}
where we used the property $|1-\bnu\cdot \bnu_\Gamma|=\frac{1}{2} |\bnu-\bnu_\Gamma|^2$ and \eqref{e:kappa_equiv} to derive the last inequality.  

Recalling the definition \eqref{d:E} yields
$$
\begin{aligned}
\| \bE^{d}_\Gamma  \|_{\ell_2 \rightarrow \ell_2} &  \le  \left | 1-\frac{q_\Gamma}{q_d} \right | \| \bAg (I-d\bW) \bAG (I-d\bW)\bAg \|_{\ell_2 \rightarrow \ell_2} 
\\ & ~~~+ \|\bAg -\bAg (I-d\bW) \bAG (I-d\bW)\bAg \|_{\ell_2 \rightarrow \ell_2}.
\end{aligned}
$$
Inserting \eqref{normal:bound2} into \eqref{matbound2}, applying \eqref{P_Pd:mismatch} to obtain $K(\bP_d(\bx)) \le \|K\|_{L_\infty(\widetilde{\omega}_T)}$, and then gathering the result along with \eqref{matbound1} and \eqref{qdcheck} into the above inequality yields \eqref{e:bound:as}.  
%
The second assertion follows from the definition of $III(v)$ in \eqref{e:error} and a Cauchy Schwarz inequality.
\end{proof}

\subsection{Term II: Data Inconsistencies}\label{ss:data}

We now focus on the second term in \eqref{e:error}:

$$
II(v_d) = \int_\Gamma f_d v_d \frac{q_d}{q_\Gamma} - \int_\Gamma F v_d.
$$
where $F =  (f \circ \bP) \frac{q}{q_\Gamma}$. 

Recall that we do not assume that the distance function, and therefore $\bP_{d}$, is accessible to the user. Otherwise, the choice $\bP = \bP_{d}$ would lead to $II=0$.
The mismatch between  $\bP(\bx)$ and $\bP_{d}(\bx)$ is reflected in a non vanishing term $II$ which must be accounted for.

Lemma~\ref{l:geom_err_func} provides an estimate for the effect of the discrepancy $\tilde v-\tilde v \circ \bP_d \circ \bP^{-1}$ for $\tilde v \in H^1(\gamma)$ in $L_2(\gamma)$.  Its proof uses standard properties of mollifiers. Given $ v \in H^1(\mathbb{R}^n)$, there exists a function $v^\epsilon$ such that for every $U \subset \mathbb{R}^n$
\begin{align}
\label{e:molifier}
\| v - v^\epsilon \|_{L_2(U)} \lesssim \epsilon  | v |_{H^1( U+ B(0,\epsilon))}\\
\textrm{and} \qquad | v^\epsilon |_{W^1_\infty(U)} \lesssim \epsilon^{-n/2}  |v^\epsilon |_{H^1(U+ B(0,\epsilon))}. \label{e:molifier2}
\end{align}

We are now in position to estimate in $L_2$ the effect of the mismatch of $\bP_d \not = \bP$ on a $H^1$ function.

\begin{lemma}[Geometric Error in Function Evaluation]\label{l:geom_err_func}
Assume that assumptions \eqref{e:shape_reg}-\eqref{qd:assume} of Section \S\ref{S:interp--surface} hold.
Then for $\tilde{w} \in H^1(\gamma)$ and $T \in \T$ we have
$$\|\tilde{w}-\tilde{w} \circ \bP_d \circ \bP^{-1}\|_{L_2(\widetilde{T})} \lesssim \beta_T \|\tilde{w}\|_{H^1(\tilde{\omega}_T)}.$$
\end{lemma}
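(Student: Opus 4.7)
My plan is to reduce the inequality to an estimate on a parametric reference patch and then to apply the mollifier bounds \eqref{e:molifier}--\eqref{e:molifier2} to handle the low regularity of $\tilde w$. Pull back by $\bXi_T:\widehat{\omega}_T\to\widetilde{\omega}_T$, set $\hat w:=\tilde w\circ\bXi_T$, and introduce
$$\hat{\bz}:=\bXi_T^{-1}\circ\bP_d\circ\bP^{-1}\circ\bXi_T:\widehat{T}\to\widehat{\omega}_T,$$
which is well-defined into $\widehat{\omega}_T$ by assumption \eqref{P_Pd:mismatch}. The estimate \eqref{e:lift_disc_est} together with the bi-Lipschitz property \eqref{bi_lipschitz} yields
$$|\hat{\bx}-\hat{\bz}(\hat{\bx})|\lesssim h_T^{-1}\beta_T=:\delta,\qquad\hat{\bx}\in\widehat{T}.$$
Using the area element bound $q\sim h_T^n$ from \eqref{bi_lipschitz} together with the chain rule relation $|\hat w|_{H^1(\widehat{\omega}_T)}^2\sim h_T^{2-n}\|\nabla_\gamma\tilde w\|_{L_2(\widetilde{\omega}_T)}^2$, the lemma reduces to the reference-patch inequality
$$\|\hat w-\hat w\circ\hat{\bz}\|_{L_2(\widehat{T})}\lesssim\delta\,|\hat w|_{H^1(\widehat{\omega}_T)},$$
since the scaling factors reassemble as $h_T^n\cdot\delta^2\cdot h_T^{2-n}=\beta_T^2$.

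For the reference-patch inequality I extend $\hat w$ off $\widehat{\omega}_T$ with constants uniform in $T$ (permitted since only finitely many distinct reference patches arise, cf.\ \S\ref{S:repres-surface}), mollify to $\hat w^\epsilon$ satisfying \eqref{e:molifier}, and split
$$\hat w-\hat w\circ\hat{\bz}=(\hat w-\hat w^\epsilon)+(\hat w^\epsilon-\hat w^\epsilon\circ\hat{\bz})+(\hat w^\epsilon-\hat w)\circ\hat{\bz}.$$
The first and third terms are controlled by $\epsilon\,|\hat w|_{H^1(\widehat{\omega}_T+B(0,\epsilon))}$ directly through \eqref{e:molifier}, using for the third that $\hat{\bz}$ is bi-Lipschitz (composition of bi-Lipschitz maps and the smooth $\bP_d$) so the induced change of variables costs only a constant. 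For the middle term, since $\hat w^\epsilon$ is smooth, I apply the fundamental theorem of calculus in integral form
$$\hat w^\epsilon(\hat{\bx})-\hat w^\epsilon(\hat{\bz}(\hat{\bx}))=\int_0^1\nabla\hat w^\epsilon\bigl((1-t)\hat{\bx}+t\hat{\bz}(\hat{\bx})\bigr)\cdot(\hat{\bz}(\hat{\bx})-\hat{\bx})\,dt,$$
then use Cauchy--Schwarz, Fubini, and for each fixed $t$ a change of variables in $\hat{\bx}$ along the affine homotopy $\Psi_t(\hat{\bx}):=(1-t)\hat{\bx}+t\hat{\bz}(\hat{\bx})$ whose image lies in the convex set $\widehat{\omega}_T$. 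This gives a middle-term bound $\lesssim\delta\,|\hat w|_{H^1(\widehat{\omega}_T)}$. Sending $\epsilon\to0$ finishes the proof of the reference-patch inequality.

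The main obstacle is extracting the correct first-order factor $\delta$ in the middle term. A direct use of the $L_\infty$ bound \eqref{e:molifier2} together with \eqref{e:molifier} only provides $\epsilon+\delta\,\epsilon^{-n/2}$, which optimizes to the insufficient rate $\delta^{2/(n+2)}$. The remedy is to keep $\nabla\hat w^\epsilon$ in $L_2$ via the integral form of the fundamental theorem rather than the pointwise form, and then to verify that $\Psi_t$ has Jacobian uniformly bounded away from $0$ for $t\in[0,1]$; this in turn follows because $D\hat{\bz}$ is uniformly bounded (its factors $D\bXi_T$ and $D\bXi_T^{-1}$ scale as $h_T$ and $h_T^{-1}$, cancelling exactly, while $D\bP_d$ and $D\bP^{-1}$ are bounded in the tubular neighborhood thanks to \eqref{N:assumption}), so eigenvalues $(1-t)+t\mu$ of $D\Psi_t$ remain controlled when $\delta$ is sufficiently small in the sense of \eqref{Nalpha:condition}.
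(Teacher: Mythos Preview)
Your overall architecture---pull back to the reference patch, extend, mollify, split into three terms---matches the paper's proof, and your treatment of the first and third pieces is the same. The divergence is in the middle term, and there the argument as written has a genuine gap.

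The change-of-variables step requires $|\det D\Psi_t|\gtrsim 1$ uniformly in $t\in[0,1]$. You argue for this by observing that $D\hat{\bz}$ is bounded and that $\delta$ is small. Neither of these, nor the two together, yields the Jacobian lower bound: boundedness of $D\hat{\bz}$ only says the eigenvalues $\mu$ of $D\hat{\bz}$ lie in a disc, while $\|\hat{\bz}-\mathrm{id}\|_{L_\infty}\le\delta$ is a $C^0$ statement that puts no constraint on $\mu$ at all. If some eigenvalue $\mu$ happens to be real and nonpositive, then $(1-t)+t\mu$ vanishes for a suitable $t\in(0,1]$ and $\Psi_t$ degenerates. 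What you would actually need is $\|D\hat{\bz}-I\|<1$, i.e.\ $C^1$-closeness of $\hat{\bz}$ to the identity. That estimate is plausible here---it amounts to comparing $D\bP_d\cdot D\bX_T$ with $D\bXi_T$, and the discrepancy can be controlled by $\lambda_T$ together with $\|K\|_{L_\infty}\beta_T$ via the normal estimate of Lemma~\ref{lem:nubound}---but it is an additional argument that you have not supplied and that does not follow from \eqref{Nalpha:condition} alone.

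The paper sidesteps this issue entirely. Instead of a change of variables along the homotopy, it sets $\epsilon=\delta$, covers $\widehat{T}$ by a lattice of $\epsilon$-balls, and on each ball uses the pointwise Lipschitz bound \eqref{e:molifier2} for $\hat w^\epsilon$: on $B_\epsilon(x_\ell)$ one has $|\hat w^\epsilon(\hat{\bx})-\hat w^\epsilon(\hat{\bz}(\hat{\bx}))|\le\epsilon\,|\hat w^\epsilon|_{W^1_\infty(B_{2\epsilon}(x_\ell))}\lesssim\epsilon^{1-n/2}|\hat w|_{H^1(B_{3\epsilon}(x_\ell))}$, then squares, multiplies by the ball volume $\epsilon^n$, and sums using finite overlap of the enlarged balls. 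This uses only the $C^0$ displacement bound $|\hat{\bx}-\hat{\bz}(\hat{\bx})|\le\epsilon$ and never touches $D\hat{\bz}$, which is why the argument goes through without the Jacobian estimate you are missing. Note also that in the paper $\epsilon$ is fixed equal to $\delta$, not sent to zero; the choice $\epsilon=\delta$ is exactly what balances the mollifier blow-up $\epsilon^{-n/2}$ against the ball volume $\epsilon^n$ and the displacement $\epsilon$.
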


\begin{proof}
For notational ease, let $\psi=\bP_d \circ \bP^{-1}$.  Fix $T \in \T$, let $\hat{w}(\hat{x})=\tilde{w} \circ \chi_T(\hat{x})$ for $\hat{x} \in \widehat{\omega}_T$, and let $\hat{\psi}=(\chi_T)^{-1} \circ \psi \circ \chi_T$.  Then by change of variables and \eqref{q:nondegen},
$$\|\tilde{w}-\tilde{w}\circ \psi\|_{L_2(\widetilde{T})} \lesssim h_T^{n/2} \|\hat{w}-\hat{w} \circ \hat{\psi}\|_{L_2(\widehat{T})}.$$
The assumption $\bP_d \circ \bP^{-1} (\widetilde{T}) \subset \widetilde{\omega}_T$ given in \eqref{P_Pd:mismatch} is equivalent to $\hat{\psi}(\widehat{T}) \subset \widehat{\omega}_T$ and is sufficient to ensure that the quantity on the right hand side is well-defined.

Note that $\hat{w}$ is defined on the reference patch $\widehat{\omega}_T$.  There is a universal extension operator mapping $H^1(\widehat{\omega}_T)$ to $H^1(\mathbb{R}^n)$ which is bounded both in $L_2$ and in the $H^1$-seminorm.  
We thus may assume that $\hat{w}$ is defined and bounded in $H^1$ on all of $\mathbb{R}^n$ and that 
$$|\hat{w}|_{H^1(\mathbb{R}^n)} \lesssim |\hat{w}|_{H^1(\widehat{\omega}_T)}.$$
Given $\epsilon>0$, we denote by $\hat{w}^\epsilon$ the standard mollification of $\hat{w}$ with radius $\epsilon$.  We may then apply \eqref{e:molifier} and \eqref{e:molifier2} to $\hat{w}$ without restriction on $\epsilon$, and
$$\|\hat{w}-\hat{w} \circ \hat{\psi}\|_{L_2(\widehat{T})} \lesssim \|\hat{w}-\hat{w}^\epsilon\|_{L_2(\widehat{T})}+ \|\hat{w}^\epsilon-\hat{w}^\epsilon \circ \hat{\psi}\|_{L_2(\widehat{T})} + \|\hat{w}^\epsilon\circ \hat{\psi}-\hat{w} \circ \hat{\psi}\|_{L_2(\widehat{T})}.$$

We first compute using \eqref{e:molifier} that 
$$\|\hat{w}-\hat{w}^\epsilon \|_{L_2(\widehat{T})} \lesssim \epsilon |\hat{w}|_{H^1(\mathbb{R}^n)} \lesssim |\hat{w}|_{H^1(\widehat{\omega}_T)}.$$
Similarly, after applying a change of variables formula using \eqref{q:nondegen} and \eqref{q:nondegen:assume} and applying the restriction $\hat{\psi}(\widehat{T}) \subset \widehat{\omega}_T$ we find that
$$\|(\hat{w}^\epsilon -\hat{w}) \circ \hat{\psi}\|_{L_2(\widehat{T})} \lesssim \|\hat{w}^\epsilon-\hat{w}\|_{L_2(\widehat{\omega}_T)} \lesssim \epsilon |\hat{w}|_{H^1(\widehat{\omega}_T)}.$$

We finally bound the term $\|\hat{w}^\epsilon-\hat{w}^\epsilon \circ \hat{\psi}\|_{L_2(\widehat{T})}$.  Let $\{x_\ell\}$ be a lattice on $\mathbb{R}^n$ with minimum distance between $x_\ell$ and $x_j$ ($\ell \neq j$) equivalent to $\epsilon$ and such that $\{B_\epsilon(x_\ell)\}$ covers $\mathbb{R}^n$.  The set $\{B_{M\epsilon}(x_\ell)\}$ then has finite overlap for any $M>0$, with the maximum cardinality of the overlap depending on $M$.  Let also $\epsilon = \|I-\hat{\psi}\|_{L_\infty(\widehat{T})}$.  Then applying \eqref{e:molifier2}, we find that
\begin{align}
\begin{aligned} 
\|\hat{w}^\epsilon & -\hat{w}^\epsilon \circ \hat{\psi}\|_{L_2(\widehat{T})}^2 \le \sum_{\ell} \|\hat{w}^\epsilon-\hat{w}^\epsilon \circ \hat{\psi}\|_{L_2(B_\epsilon(x_\ell) \cap \widehat{T})}^2
\\ & \lesssim \epsilon^{n} \sum_{\ell} \|\hat{w}^\epsilon-\hat{w}^\epsilon \circ \hat{\psi}\|_{L_\infty(B_\epsilon(x_\ell) \cap \widehat{T})}^2 
 \lesssim \epsilon^{n+2}  \sum_{\ell} |\hat{w}^\epsilon|_{W_\infty^1(B_{2\epsilon}(x_\ell))}^2
\\ & \lesssim \epsilon^2 \sum_{\ell} |\hat{w}|_{H^1(B_{3\epsilon}(x_\ell))}^2
 \lesssim \epsilon^2 |\hat{w} |_{H^1(\mathbb{R}^n)}^2
\lesssim \epsilon^2 |\hat{w}|_{H^1(\widehat{\omega}_T)}^2.
\end{aligned}
\end{align}

We finally compute using the bi-lipschitz character of $\chi$ that 
\begin{align}
\begin{aligned} 
\epsilon=\|I&-\hat{\psi}\|_{L_\infty(\widehat{T})}   = \|(\chi_T)^{-1} \circ(I-\psi) \circ \chi_T \|_{L_\infty(\widehat{T})} 
\\ & \le Lh_T^{-1} \|(I-\psi) \circ \chi_T \|_{L_\infty(\widehat{T})} = L h_T^{-1}\|I-\bP_d \circ \bP^{-1} \|_{L_\infty(\widetilde{T})}.
\end{aligned}
\end{align} 
Recalling \eqref{e:lift_disc_est} then yields
$$\epsilon \lesssim h_T^{-1} \beta_T.$$
Again performing a change of variables yields
$$\begin{aligned}
\|\tilde{w}-\tilde{w} \circ \psi \|_{L_2(\widetilde{T})}^2 &\lesssim h_T^n \|\hat{w}-\hat{w} \circ \hat{\psi}\|_{L_2(\hat{T})}^2 \lesssim h_T^n \epsilon^2 |\hat{w}|_{H^1(\hat{\omega}_T)}^2 
\\ & \lesssim h_T^n h_T^{-2} \beta_T^2 h_T^{2-n} |\tilde{w}|_{H^1(\widetilde{\omega}_T)}^2 =\beta_T^2 |\tilde{w}_{H^1(\widetilde{\omega}_T)}^2.  
\end{aligned}
$$
\end{proof}

We now return to term II and estimate the discrepancy between the theoretical quantity 
$$
\int_\Gamma (f  v) \circ \bP_{d} \frac{q_d}{q_\Gamma} 
$$
and the practical quantity
$$
\int_\Gamma (f \circ \bP) (v \circ \bP_{d}) \frac{q}{q_\Gamma}.
$$
This is the purpose of the next lemma.

\begin{lemma}[Data Geometric Consistency Error]\label{lem:data_cons}
For every $v \in H^1(\gamma)$ and $f \in L_2(\gamma)$ we have
\begin{align}\label{data_cons}
\begin{aligned}
\left| \int_{\Gamma} (f v) \circ \bP_{d} \frac{q_d}{q_\Gamma} 
-
\int_{\Gamma} (f \circ \bP) (v \circ \bP_{d}) \frac{q}{q_\Gamma} \right| & \lesssim 
\sum_{T \in \T} \beta_T  \|f\|_{L_2(T)} \|v\|_{H^1(\omega_T)} 
\\ & \lesssim (\sum_{T \in \T} (\beta_T)^2 \|f\|_{L_2(T)}^2)^{1/2} \|v\|_{H^1(\gamma)}
\\ & \lesssim \| f \|_{L_2(\gamma)} \| v \|_{H^1(\gamma)} \beta.
\end{aligned}
\end{align}
Thus 
$$
\sup_{v \in H^1_\#(\gamma), \ \| \nabla v\|_{L_2(\gamma)} = 1} II(v \circ \bP_{d}) \lesssim \left (  \sum_{T \in \T} \left [ \| f \|_{L_2(\tilde{T})} \beta_T \right ] ^2 \right)^{1/2} \preceq \beta.
$$
\end{lemma}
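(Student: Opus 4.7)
The plan is to change variables in both integrals so that the difference becomes a single integral on $\gamma$ against the discrepancy $v-v\circ\psi$ where $\psi=\bP_d\circ\bP^{-1}$, and then invoke Lemma~\ref{l:geom_err_func}.

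First, using the area element identities $\int_\Gamma (g\circ\bP_d)\,q_d/q_\Gamma=\int_\gamma g$ and $\int_\Gamma (g\circ\bP)\,q/q_\Gamma=\int_\gamma g$ (valid for any $g:\gamma\to\mathbb R$ by the change-of-variables formula with Jacobians $q_d/q_\Gamma$ and $q/q_\Gamma$), I would rewrite
$$
\int_\Gamma (fv)\circ\bP_d\,\frac{q_d}{q_\Gamma}=\int_\gamma fv,\qquad
\int_\Gamma (f\circ\bP)(v\circ\bP_d)\,\frac{q}{q_\Gamma}=\int_\gamma f\cdot(v\circ\psi),
$$
the latter being obtained by noting that $(f\circ\bP)(v\circ\bP_d)=g\circ\bP$ for $g=f\cdot(v\circ\psi)$ since $v\circ\bP_d=(v\circ\psi)\circ\bP$. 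Subtracting yields the key identity
$$
II(v\circ\bP_d)=\int_\gamma f\bigl(v-v\circ\psi\bigr).
$$

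Second, I would split $\gamma=\bigcup_{T\in\T}\widetilde{T}$ and apply Cauchy--Schwarz on each $\widetilde{T}$, then invoke Lemma~\ref{l:geom_err_func} to bound $\|v-v\circ\psi\|_{L_2(\widetilde{T})}\lesssim\beta_T\|v\|_{H^1(\widetilde{\omega}_T)}$. Using the norm equivalences $\|f\|_{L_2(\widetilde{T})}\simeq\|f\|_{L_2(T)}$ and $\|v\|_{H^1(\widetilde{\omega}_T)}\simeq\|v\|_{H^1(\omega_T)}$ (both following from \eqref{bi_lipschitz} and \eqref{q:nondegen:assume}), the first inequality in \eqref{data_cons} follows. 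The second inequality is a global Cauchy--Schwarz in the index $T$ together with the finite-overlap property of the patches $\{\widetilde{\omega}_T\}$ (which is a consequence of the uniform bound on the cardinality of $\widehat\omega_T$ assumed in \S\ref{S:repres-surface}). The third inequality is then obtained by bounding each $\beta_T\le\beta$.

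For the final sup statement, if $v\in H^1_\#(\gamma)$ with $\|\nabla_\gamma v\|_{L_2(\gamma)}=1$, the Poincar\'e--Wirtinger inequality on the closed surface $\gamma$ yields $\|v\|_{L_2(\gamma)}\lesssim 1$ and hence $\|v\|_{H^1(\gamma)}\lesssim 1$, so the estimate on $II(v\circ\bP_d)$ is a direct consequence of \eqref{data_cons}.

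The substantive work has already been done in Lemma~\ref{l:geom_err_func}, so the principal obstacle here is purely bookkeeping: one must carry out the change of variables correctly between the three domains (reference $\widehat T$, discrete $\Gamma$, and continuous $\gamma$) and recognize that the mismatch between the two parametrizations is entirely captured by the single map $\psi=\bP_d\circ\bP^{-1}$. Once this identification is made, the remainder is a standard localization-plus-Cauchy--Schwarz argument.
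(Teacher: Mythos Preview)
Your proof is correct and follows essentially the same approach as the paper: both rewrite the two integrals on $\gamma$ via change of variables to obtain $\int_\gamma f(v-v\circ\psi)$ with $\psi=\bP_d\circ\bP^{-1}$, localize and apply Cauchy--Schwarz, then invoke Lemma~\ref{l:geom_err_func} and finite overlap. Your version is in fact slightly more explicit than the paper's in tracking the norm equivalences between $T$ and $\widetilde T$ and in spelling out the Poincar\'e--Wirtinger step needed for the final sup bound.
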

\begin{proof}
We write both integrals on $\gamma$ using the lifts $\bP_{d}$ and $\bP$ respectively:
$$
 \int_{\Gamma} (f v) \circ \bP_{d} \frac{q_d}{q_\Gamma} = \int_\gamma f v , \qquad 
\int_{\Gamma} (f \circ \bP) (v \circ \bP_{d}) \frac{q}{q_\Gamma} 
=
\int_{\gamma} f   (v \circ \bP_{d} \circ \bP^{-1})
$$
so that
\begin{equation*}
\begin{split}
\left| \int_{\Gamma} (f v) \circ \bP_{d} \frac{q_d}{q_\Gamma} 
-
\int_{\Gamma} (f \circ \bP) (v \circ \bP_{d}) \frac{q}{q_\Gamma} \right| & \lesssim 
\sum_{T \in \T} \| f \|_{L_2(T)} \| v -  v \circ \bP_{d} \circ \bP^{-1}\|_{L_2(T)}.   
\end{split}
\end{equation*}
The first ``$\lesssim$'' in \eqref{data_cons} follows upon invoking Lemma~\ref{l:geom_err_func}, the second upon using Cauchy-Schwarz, and the third upon applying finite overlap of $\{\widetilde{\omega}_T\}_{T \in \T}$.  
The second assertion in Lemma \ref{lem:data_cons} follows from the definition of $II(v_d)$ in \eqref{e:error} and the application of a Cauchy-Schwarz inequality.
\end{proof}

\section{Numerical Illustrations} \label{s:numerics}

We present several examples illustrating the benefits when using the proposed geometric estimator $\mu$ instead of $\lambda$.  We consider the adaptive algorithm proposed in \cite{ BCMMN16, BCMN:Magenes} and recalled now:

\medskip
{\bf AFEM:}
Given an initial subdivision $\T_0$ and parameters $\varepsilon_0>0$, $0<\rho<1$, and
$\omega>0$, set $k=0$.
\begin{algotab}
  \> \>1. $\T_k^+ = \ADAPTSURF (\T_k,\omega \eps_k)$\\
  \> \>2. $[U_{k+1},\T_{k+1}] = \ADAPTPDE (\T_k^+,\eps_k)$ \\
  \> \>3. $\eps_{k+1} = \rho \eps_k$; $k = k+1$ \\  
  \> \>4. go to 1.
\end{algotab}
\noindent

The procedure $\ADAPTSURF (\T_k,\omega \eps_k)$ targets a better approximation of the surface and produces a finer subdivision $\T_k^+$ such that the geometric estimator $\lambda_{\widehat{\T}_k^+}$ reaches a value below $\omega \eps_k$. 
This is performed using a greedy algorithm, i.e. the elements of the subdivisions are successively refined until the geometric estimator on each element is below the targeted tolerance $\eps$:

\begin{algotab}
\> $\T_* = \ADAPTSURF (\T,\eps)$ \\
\> \> 1. if  $\mathcal M :=\{T\in \T \, : \,\lambda_{\T}(\gamma,T)>\eps \} =\emptyset$ \\
\>  \>  \> \> return($\T$) and exit \\
\> \> 2. $\T = \REFINE(\T,\mathcal M)$\\
\> \> 3. go to 1.
\end{algotab}

The procedure $\ADAPTPDE (\T_k^+,\eps_k)$ instead consists of several iterations of the adaptive loop 
\begin{equation}\label{e:SEMR}
\SOLVE \rightarrow \ESTIMATE \rightarrow \MARK \rightarrow \REFINE
\end{equation}
aimed towards reducing the PDE error until the finer subdivision / approximate surface $\T_{k+1} / \Gamma_{k+1}$ are such that the associated finite element solution $U_{k+1}$ satisfies $\eta_{\T_{k+1}}(U_{k+1},F_{\Gamma_{k+1}}) \leq \eps_k$.

In both cases, the resulting subdivision is conforming upon refining additional elements without increasing the overall complexity.
We refer to \cite{BCMMN16, BCMN:Magenes} for additional details.  

While our theory above is presented for subdivision made of simplices, our implementation is based on the \textrm{deal.ii} library \cite{BHK:07} and uses quadrilaterals and finite element spaces $Q_s$ (instead of $\mathcal P_s$) based on polynomials of degree at most $s$ in each direction on the  reference hypercube.
In addition, in both of our examples below the surface is not closed, that is, $\partial \gamma \neq \emptyset$.
Our arguments extend to these different situations under some further assumptions, most notably that $\bP(\Gamma)=\bP_d(\Gamma)=\gamma$, which is indeed the case for the two examples below.

\subsection{PDE error driven on a half sphere}

The first example consists of a case where the geometry, i.e. the surface, is smooth and therefore the dominant term estimating the error (see Theorem~\ref{t:main}) is the PDE estimator $\eta$. 

We consider the half sphere $\gamma = \mathbb S^2 \cap \left\lbrace \bx \geq 0 \right\rbrace$.
We also choose $\omega=1$ and $\rho=\frac 1 2$.  {The exact solution is given in spherical coordinates by $u=\sin(\phi) \sin(\theta)^{3/5}$, with $f=-\Delta_\gamma u$.  Thus $u$ has a singularity at the north pole similar to a corner singularity on a nonconvex polygonal domain, and a graded mesh is necessary to recover optimal convergence.}  For the approximation parameters, we take $r=2$ and $k=1$, which asymptotically yields decays $\eta \sim N^{-1}$, $\lambda \sim N^{-\frac 12}$ and $\mu \sim N^{-1}$ when using $N$ degrees of freedom in a mesh.  This is optimal for the given quantity.  The striking difference in the performance of the adaptive routine when using the BCMMN estimator and the proposed one is illustrated in Figure~\ref{fig:halfsphere} and Table~\ref{tab:halfsphere}.   The meshes indicate that the BCMMN estimator equidistributes far more degrees of freedom across the sphere in order to resolve geometry, while the BD estimator concentrates refinement at the north pole in order to control the singularity in the PDE solution.  The BCMMN estimator quantitatively yields suboptimal convergence $N^{-1/2}$ of the overall error due to overrefinement based on the geometric estimator.  In contrast, the proposed BD algorithm yields optimal $N^{-1}$ convergence. 

\begin{figure}
\begin{center}
\begin{tabular}{cc}
\includegraphics[width=0.3\textwidth]{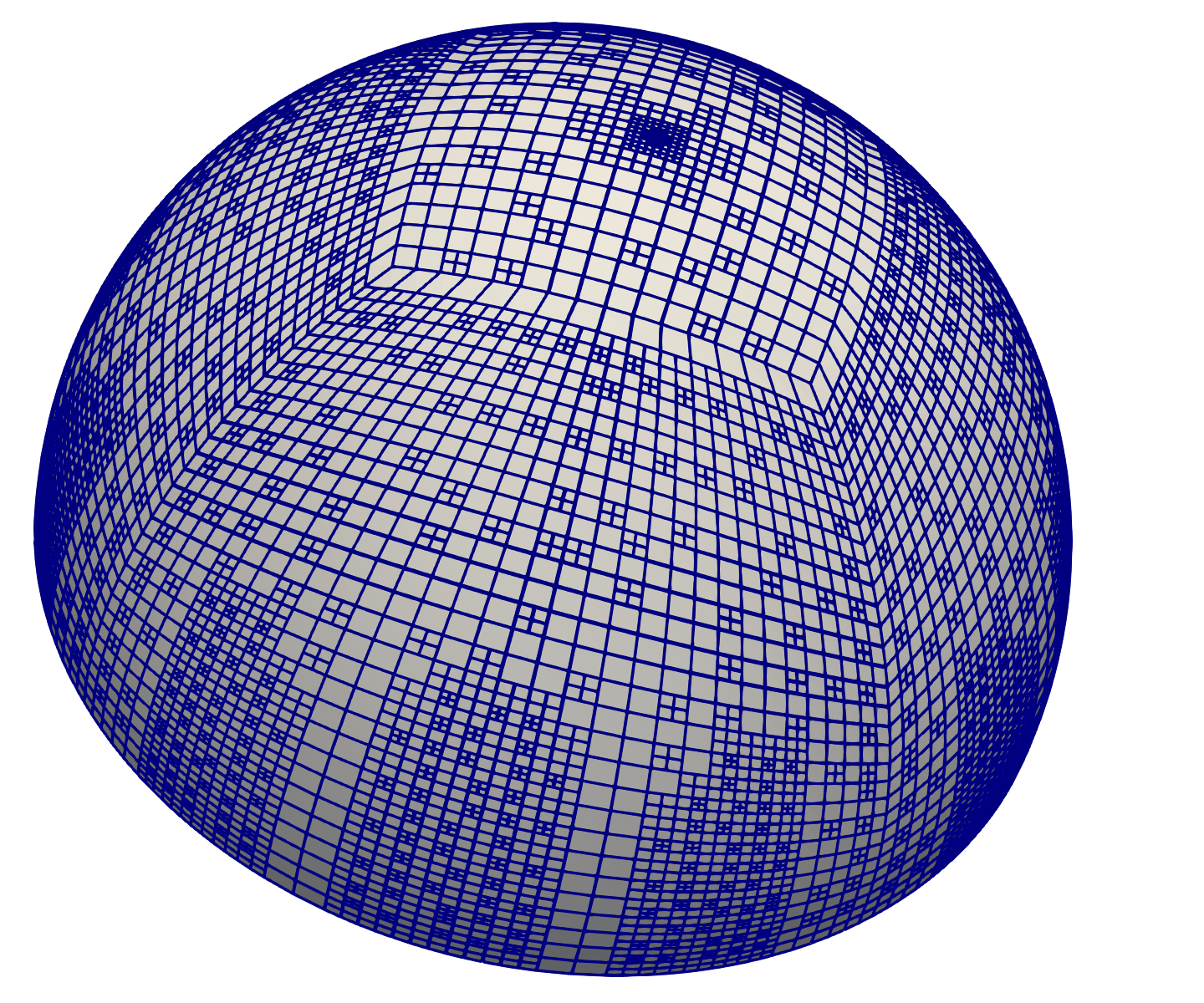} & \multirow{2}{*}[16ex]{\includegraphics[width=0.63\textwidth]{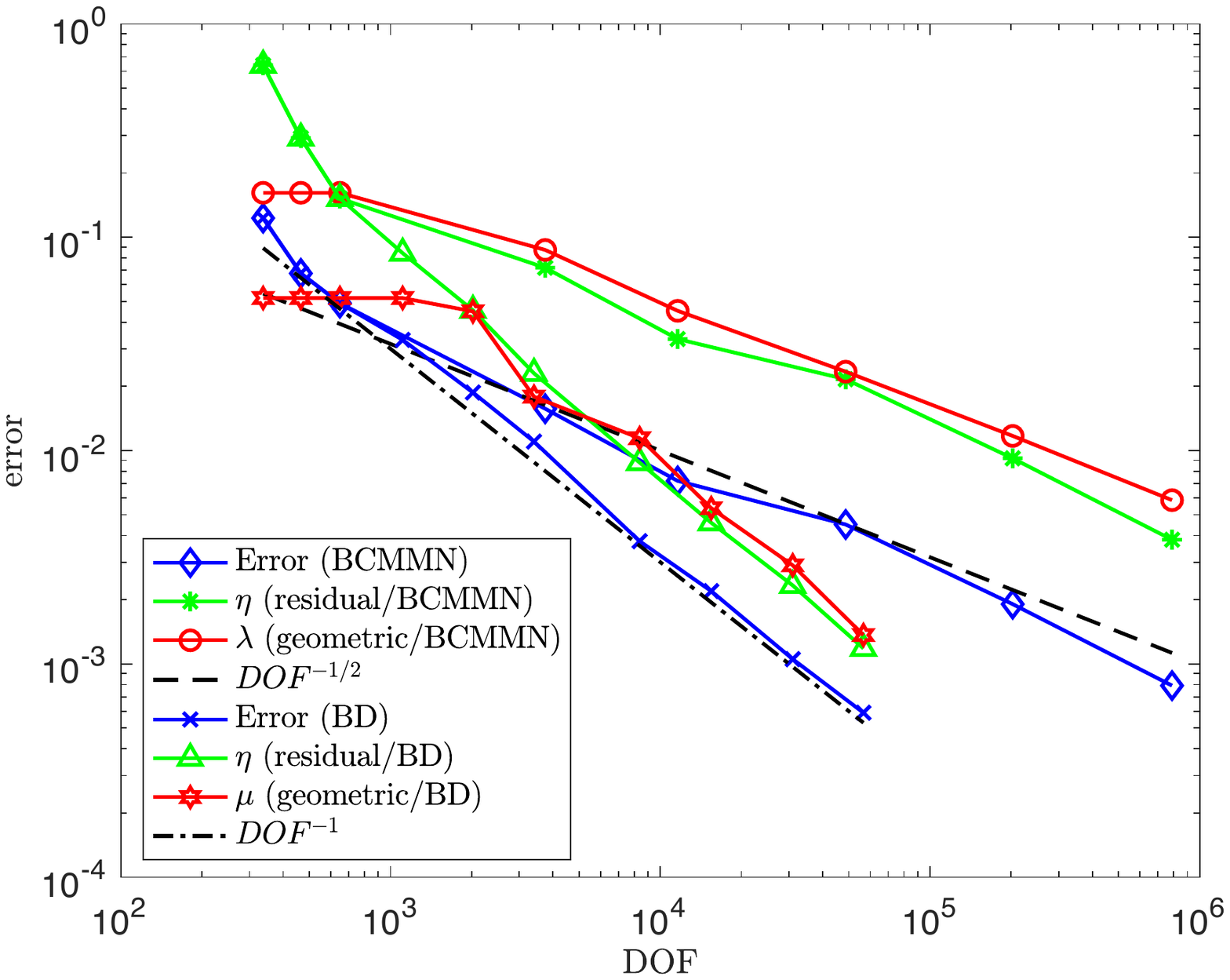}}\\
\includegraphics[width=0.3\textwidth]{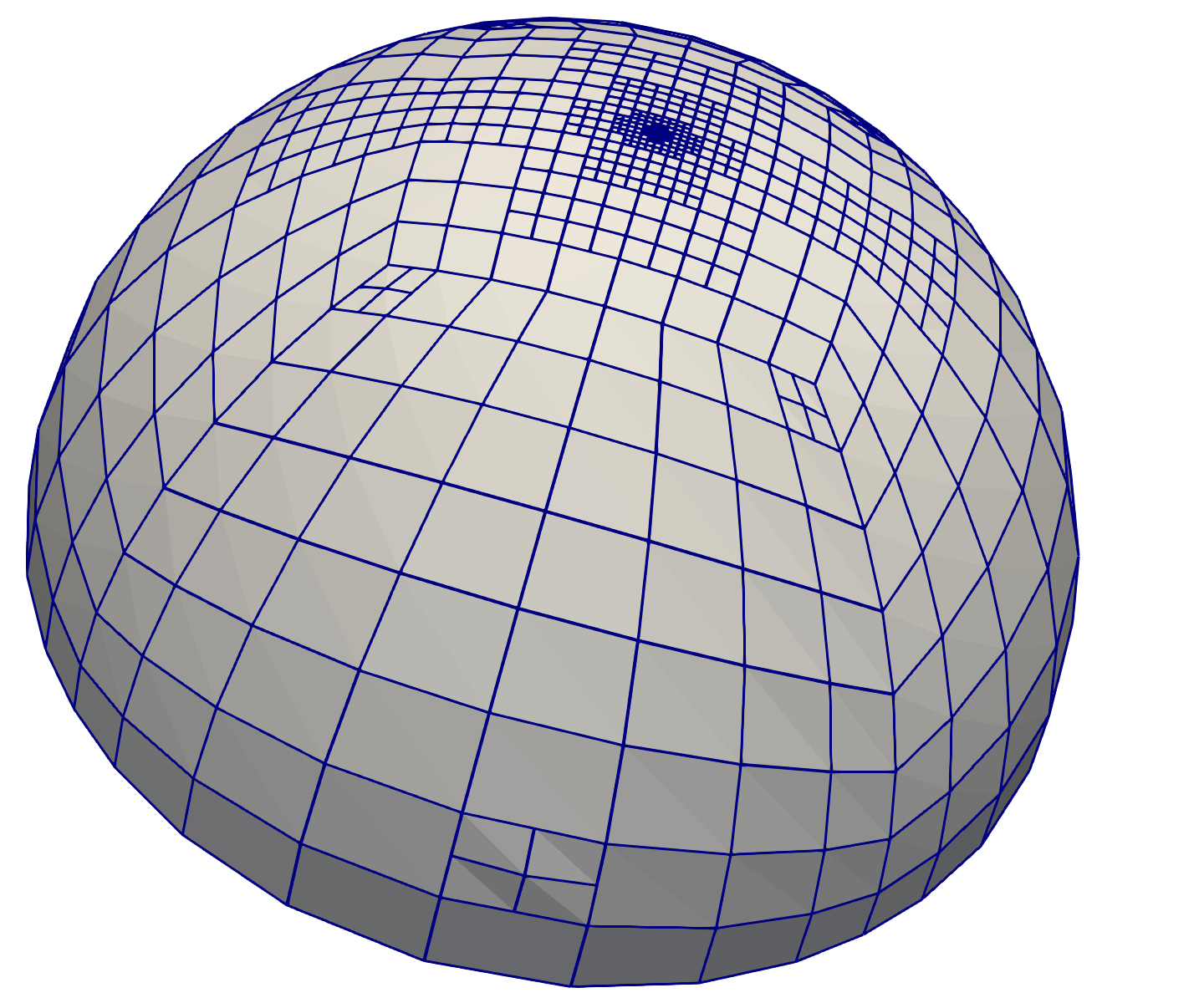} &\\
\end{tabular}
\end{center}
\caption{Smooth surface: Comparison between the algorithm based on the geometric estimators $\lambda$ (BCMMN)  and $\mu$ (BD).
The subdivisions constructed to achieve an error tolerance $\eps = 3/2^8$ are provided in the left column for (top) BCMMN and (bottom) BD. 
When $\gamma$ is the upper half sphere, the adaptive algorithm based on $\mu$ produces a geometric refinement drastically less pronounced in order to achieve the targeted tolerance. In fact, the BCMMN algorithm yields an overrefinement for the geometry in this context.
The right plot provide the evolution of the errors and estimators during the adaptive algorithms. 
The BD algorithm yields optimal error decay $\textrm{DOF}^{-1}$ unlike the BCMMN version which exhibits a convergence order of $\textrm{DOF}^{-1/2}$. }\label{fig:halfsphere}
\end{figure}

\begin{table}
\begin{tabular}{c|cc}
& BCMMN &  BD \\
\hline
Total no of PDE refinement loops \eqref{e:SEMR} & 9  &  10  \\
Total no of Geometric refinement loop in \ADAPTSURF & 1388 &  7\\
Total no of degree of freedom to achieve tolerance $\eps=3/2^8$ & 
 48689  &  3403
\end{tabular}
\caption{Comparison of the BCMMN \cite{BCMMN16, BCMN:Magenes} and the proposed estimator when used in an adaptive finite element loop.} \label{tab:halfsphere}
\end{table}

\subsection{PDE error driven on a $C^{2,\alpha}$ surface}

We now turn our attention to a case where the geometric error plays an important role due to limited regularity of $\gamma$.
For this, we consider the surface $\gamma$ given as the graph of
$$
z(x,y) = \left( \frac 3 4 - x^2 - y^2 \right)_{+}^{2+\alpha}
$$
for $\alpha = \frac 2 5$ and where $(x,y) \in (0,1)^2$ which implies $\gamma \in C^{2,\alpha}$.  Here we also denote by $(v)_+$ the positive portion of $v$.  
In this case again, the parametrization of $\gamma$ is given by $z$.
We set $f = 1$ and impose vanishing Dirichlet boundary conditions on $\partial \gamma$. 

We execute both algorithms up to a final tolerance $\eps= 5 \times 10^{-7}$ and again with 
$\omega=1$ and $\rho=\frac 1 2$. 
For the BCMMN algorithm we set $r=k=3$ while $r=3$, $k=2$ is chosen for the BD algorithm.
The exact solution $u$ is unknown but we expect that the PDE estimator will behave like $N^{-3/2}$ when using $N$ degrees of freedom and optimal meshes.
This is indeed the case for the BD algorithm as illustrated in Figure~\ref{fig:C2alpha}, where in addition to the values of the estimator for different values of $N$, the subdivisions constructed in both cases to guarantee and error smaller that $\eps$ are reported. Note that the BCMMN algorithm seems to exhibit a slightly suboptimal error decay, likely due to inability to resolve the geometric error notion controlled by $\lambda$ with rate $N^{-3/2}$.

{We finally note that the convergence order of $DOF^{-1}$ observed under uniform refinement above indicates that roughly $u \in H^3(\gamma)$.  To see this, let $\bXi=(x,y,z(x,y))$ be the lift from $\Omega=(0,1)^2$ to $\gamma$, and let $\hat{u}=u \circ \bXi^{-1}$.  According to \eqref{lb_def}, $\hat{u}$ solves the elliptic PDE $-{\rm div} (q \nabla \hat{u} {\bf g}^{-1}) = q$ in $\Omega$.  Note that $q=1$ and the coefficient $q {\bf g}^{-1}=I$ outside of the circle $x^2+y^2=1$, and these quantities vary inside of the circle.  At the three corners of $\Omega$ where $z(x,y)=0$ we are thus solving $-\Delta \hat{u}=1$, and standard regularity theory for polygonal domains indicates that in those regions $u \in H^s$ for any $s<3$ (cf. \cite[Theorem 14.6]{Da88}).  At the origin (the corner of $\Omega$ at which $z(x,y)$ varies; cf. the illustration in Figure \ref{fig:C2alpha}) $\hat{u}$ solves an elliptic PDE with smooth coefficient.  Establishing the regularity of $\hat{u}$ near this corner is more complicated due to the presence of the coefficient, but the coefficient is smooth and isotropic at the origin and so it is plausible that the regularity is the same here as at the other corners.  In addition, away from the boundary $\hat{u}$ solves a PDE with smooth coefficient.  In particular, it is possible to calculate using the definitions in Section \ref{S:diff-geom} that at a minimum the area element $q \in C^1(\Omega)$ (and thus also $q \in H^1(\Omega)$) and that the coefficient $q {\bf g}^{-1} \in C^2(\Omega)$.  In fact, each appears to possess at least fractionally more smoothness than this.  Standard elliptic regularity theory thus yields also that at the least  $\hat{u} \in H^3({\rm int}(\Omega))$.  Thus elliptic corner singularities and not the regularity of $\gamma$ appear to place the heaviest restriction on the regularity of $\hat{u}$ and thus on convergence rates under quasiuniform refinement.  On the other hand, these corner singularities have infinite smoothness in the context of adaptive refinement and are not responsible for the limited convergence rate observed when using BCMMN refinement.}

\begin{figure}
\begin{center}
\begin{tabular}{cc}
\includegraphics[width=0.3\textwidth]{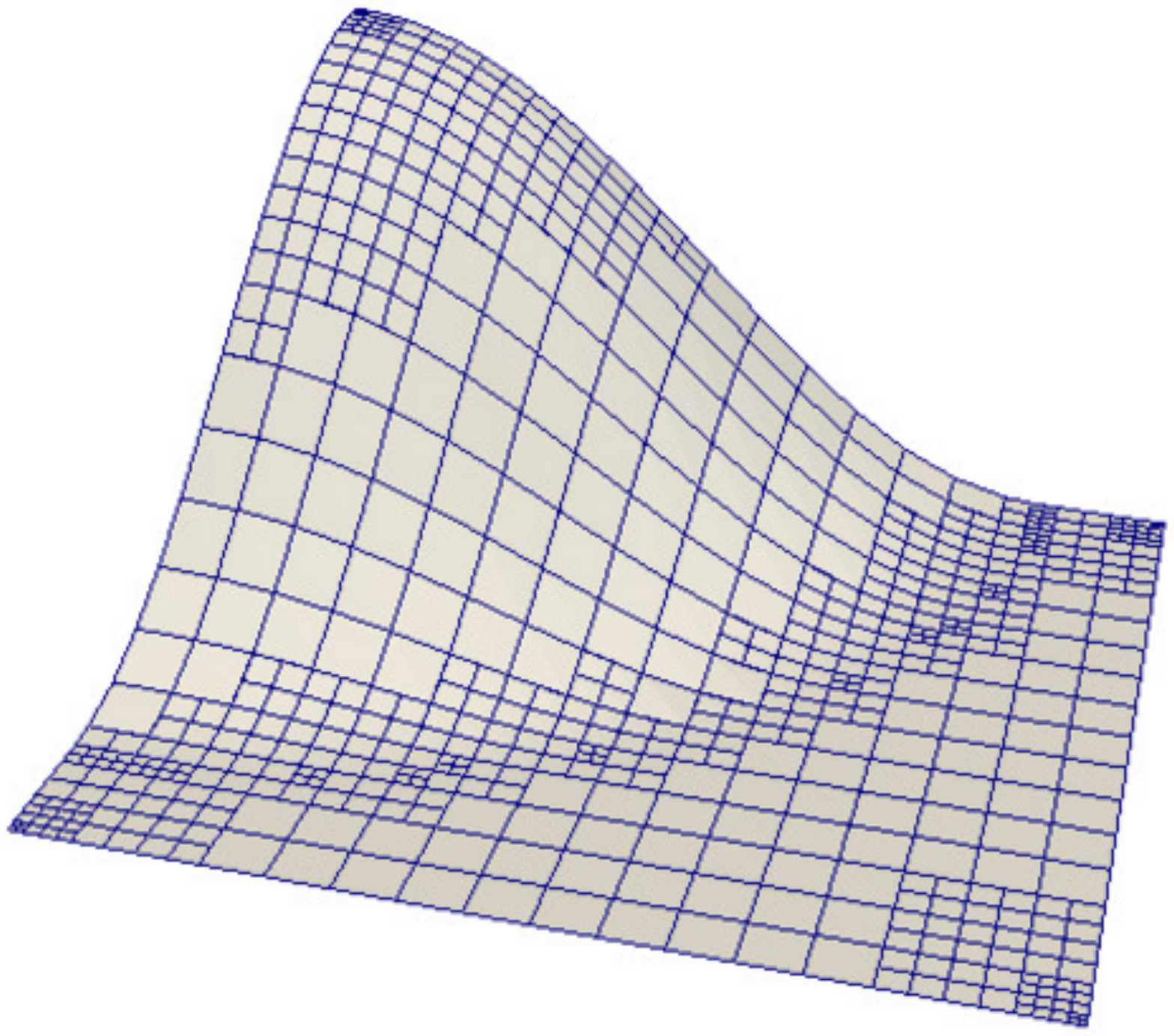} & \multirow{2}{*}[30ex]{\includegraphics[width=0.63\textwidth]{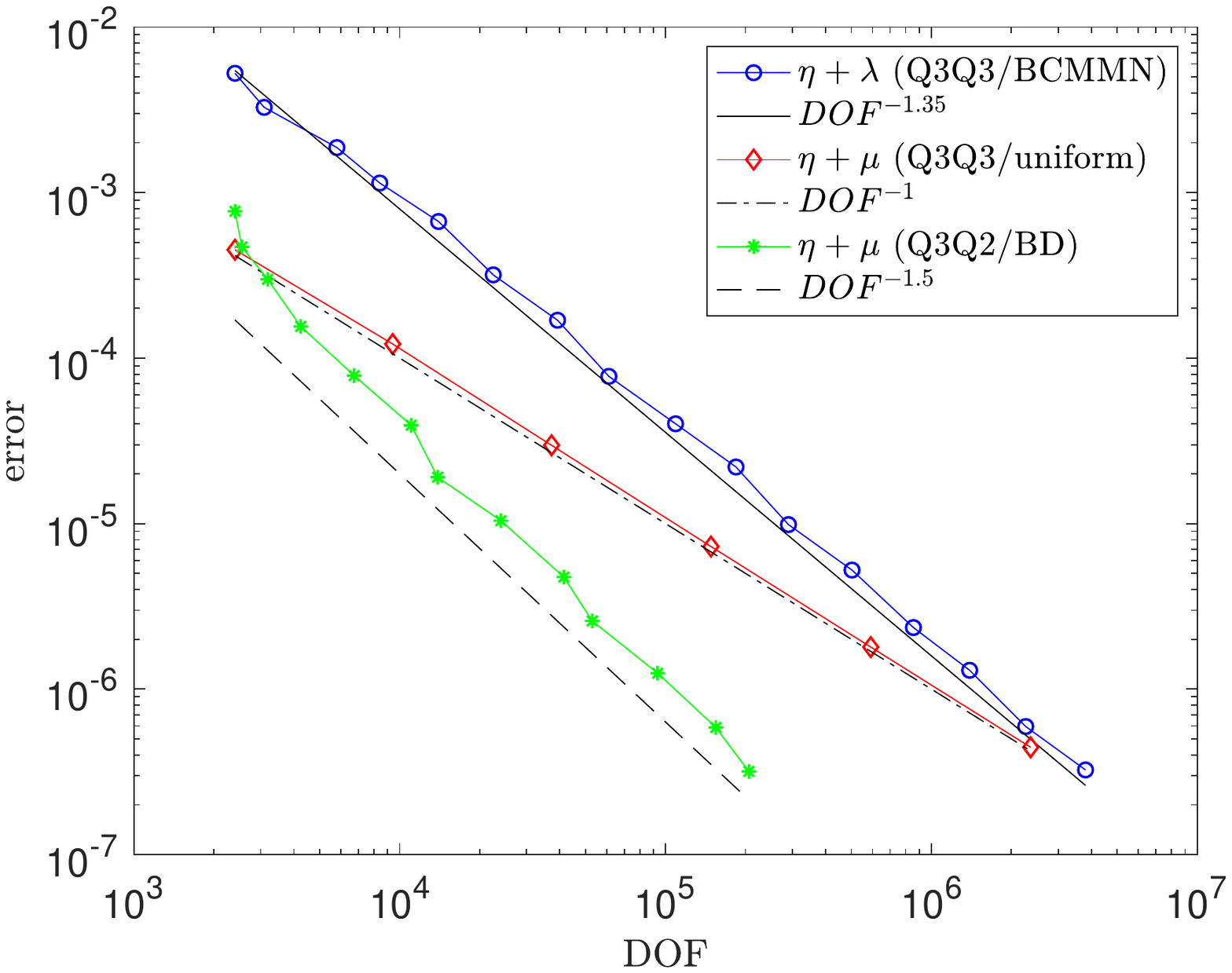}}\\
\includegraphics[width=0.3\textwidth]{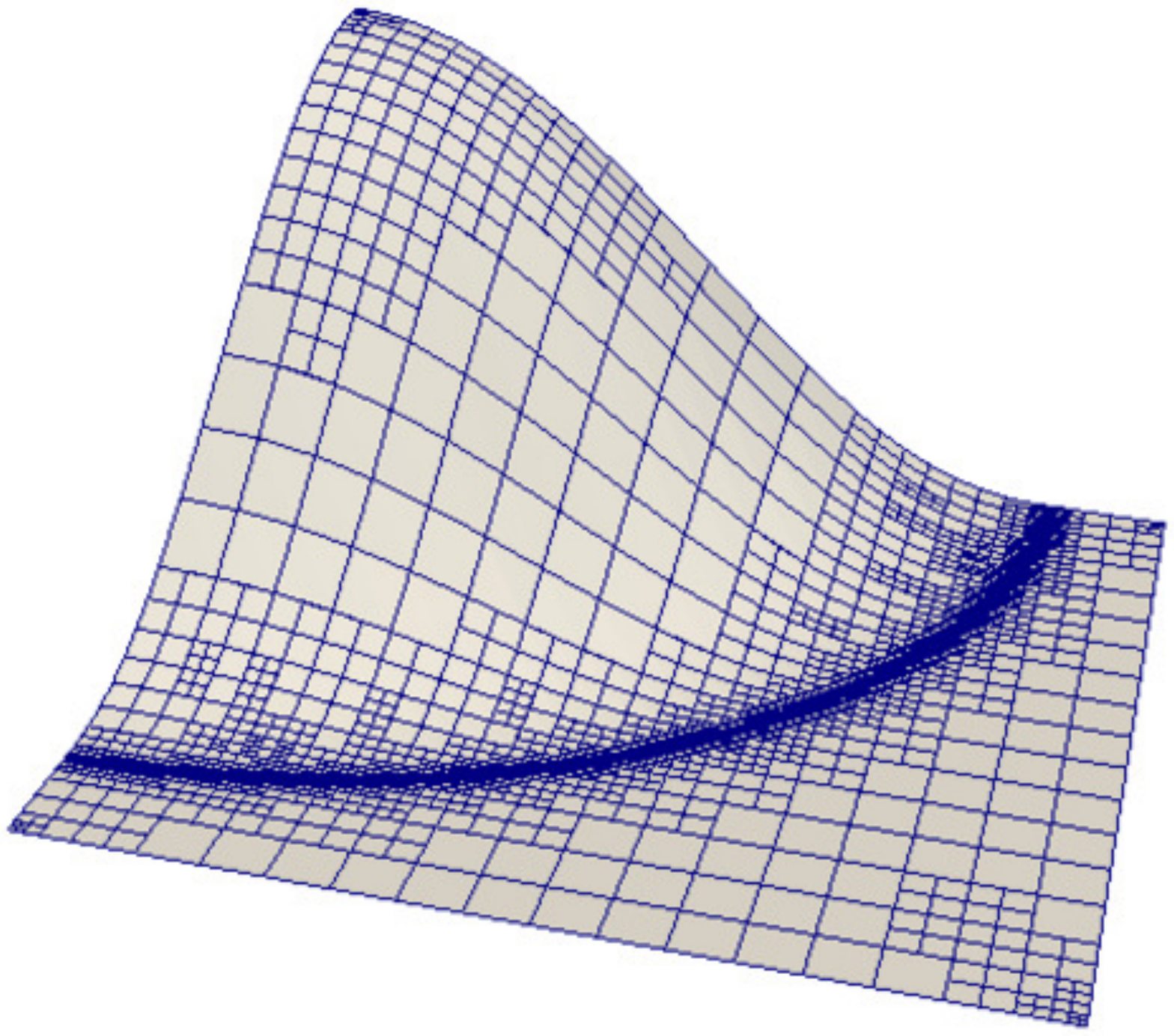} &\\
\end{tabular}
\end{center}
\caption{Smooth surface: Comparison between the algorithm based on the geometric estimators $\lambda$ (BCMN) with $r=k=3$ and $\mu$ (BD) with $r=3$, $k=2$. 
The subdivisions constructed to achieve an error tolerance $\eps =  5\times10^{-7}$ are provided in the left column for (top) BCMMN and (bottom) BD. 
In this case $\gamma$ is $C^{2,\frac 2 5}$ and the geometry approximation influences the rate of convergence.
Again, the adaptive algorithm based on $\mu$ produces a geometric refinement drastically less pronounced in order to achieve the targeted tolerance.
The right plot provide the evolution of the error estimators during the adaptive algorithms. 
With a one degree lower for the polynomial approximation of the geometry, the adaptive algorithm based on the BD estimator exhibits an optimal error decay $\textrm{DOF}^{-3/2}$, while the BCMMN algorithm appears to exhibit a slightly suboptimal error decay. For comparison, we also provide the outcome of a sequence of uniform refinement with $r=k=3$ leading to an error decay of $\textrm{DOF}^{-1}$.}\label{fig:C2alpha}
\end{figure}

\section{Perspectives} 
\label{s:perspectives}
In this section we briefly discuss two further questions raised by our work.  The first is the question of convergence of adaptive FEM naturally generated from our a posteriori estimators, as were employed in the numerical tests above.  The recent paper \cite{BCMMN16} from which we drew important elements of our technical structure proved convergence and optimality of an AFEM for the Laplace-Beltrami operator in which the geometric contribution is measured only by $\lambda$.  We hope to prove similar results for our AFEM employing $\mu$.

Another fundamental question concerns the regularity of $\gamma$.  Our results above strongly use the assumption that $\gamma$ is globally $C^2$, while \cite{BCMMN16} requires substantially less surface regularity.  If $\gamma$ is $C^2$, our results yield a geometric error of heuristic a priori order $h^2$.  On the other hand, if $\gamma$ is only $C^{1,1}$, our results have not been proven to apply, while the framework of \cite{BCMMN16} is still valid.  That work however gives a geometric error contribution of heuristic a priori order $h$.  Thus there is a jump in provable geometric error order from $h$ to $h^2$ when moving from $C^{1,1}$ to $C^2$ surfaces.  To our knowledge it is a completely open question whether this is an artifact of proof, and if so, how to provide a unified theory of a priori and a posteriori error estimation for surfaces of varying regularities.  

\bibliographystyle{siam}
\bibliography{lb_ho}

\end{document}